\documentclass[12pt]{amsart}
\usepackage{latexsym,enumerate}
\usepackage{amssymb, xcolor,mathrsfs}
\usepackage{amsmath,amsthm,amsfonts,amssymb,latexsym, mathabx}
\usepackage{comment}

\headheight=7pt
\textheight=574pt
\textwidth=432pt
\topmargin=14pt
\oddsidemargin=18pt
\evensidemargin=18pt

\newtheorem{theorem}{Theorem}
\newtheorem{lemma}[theorem]{Lemma}
\newtheorem{proposition}[theorem]{Proposition}
\newtheorem{corollary}[theorem]{Corollary}

\newtheorem{question}[theorem]{Question}

\theoremstyle{definition}

\newcommand{\Z}{\mathbb{Z}}
\newcommand{\Q}{\mathbb{Q}}

\newcommand{\C}{\mathbb{C}}

\newcommand{\Cent}{\mathbf{Z}}

\newcommand{\ol}{\overline}

\newcommand{\Mod}[1]{\ (\mathrm{mod}\ #1)}

\DeclareMathOperator{\SL}{SL}

\DeclareMathOperator{\PSL}{PSL}
\DeclareMathOperator{\Sz}{Sz}

\DeclareMathOperator{\Syl}{Syl}

\DeclareMathOperator{\Irr}{Irr}

\begin{document}
	
	\title[On Generalized Characters]{On Generalized Characters Whose Values on Nonidentity Elements are Sums of at Most Two Roots of Unity}

	\author[Christopher Herbig]{Christopher Herbig}
	\address{%
		Northern Illinois University\\
		Department of Mathematical Sciences\\
		Dekalb, IL 60115\\
		USA}
	\email{cherbig@niu.edu}

	\subjclass[2020]{20C15, 20K01 (Primary) 20B05, 05C25 (Secondary)}
	\keywords{generalized characters, abelian groups, finite groups, prime graphs}

	\date{\today}
	
	\maketitle
	
	
	\begin{abstract}
	A character of a finite group having degree $n$ takes values which may be expressed as sums of $n$ or fewer roots of unity. In this note, we prove a result which describes the irreducible constituents of generalized characters on abelian groups whose values on nonidentity elements are expressible as sums of two or fewer roots of unity. In \S4, we apply our main result to obtain information about the connectivity of prime graphs for groups admitting such characters.
	\end{abstract}
	
	
	\section{Introduction}
	
	All groups are assumed to be finite, and our notation follows that of \cite{Is}. In particular, all characters considered will be complex valued, and we denote the set of irreducible characters of a group $G$ as $\Irr(G)$. By a character, we mean any sum of irreducible characters, and by a generalized character, we mean any $\Z$-linear combination of irreducible characters of $G$. We will always take $\zeta_n$ to be the primitive $n$-th root of unity $\exp(2i\pi/n)$, and we let $\Q_n$ denote the cyclotomic extension of $\Q$ generated by a primitive $n$-th root of unity. For a group $G$, we take $G^{\#}$ to be the set of all nonidentity elements of $G$. We let $\pi(G)$ denote the set of all prime divisors of $G$, and for $\pi \subset \pi(G)$, we let $\pi'$ denote $\pi(G) - \pi$. If $n$ is any integer and $\pi$ is any set of primes, then $n_\pi$ is taken to denote the largest divisor of $n$ whose prime divisors all lie in $\pi$. Further, if $\pi = \{p\}$ for some prime $p$, we simply write $n_p$ as opposed to $n_{\{p\}}$. Similarly, if $x \in G$, then for any $\pi \subset \pi(G)$, we may uniquely factorize $x$ as a product of a $\pi$-element $x_\pi$ and a $\pi'$-element $x_{\pi'}$ such that $x = x_\pi x_{\pi'} = x_{\pi'} x_\pi$.  Lastly, we take $\rho_G$ to denote the regular character of $G$.
	
	Our work here is largely motivated by similar work of Geoffrey Robinson. Motivated in part by the Glauberman correspondence, Robinson has studied generalized characters of groups whose values on nonidentity elements are roots of unity. In particular, the main result of \cite{R} provides a purely group-theoretic characterization of groups admitting such a generalized character, working under the additional assumption that the Sylow 2-subgroups of the group have no cyclic subgroups of index 2. For reference, we partially state Robinson's main result below:
	
	\begin{theorem}[Robinson, 2011]\label{T:Rob1}
		Assume that the Sylow 2-subgroups of a finite group $G$ have no cyclic subgroups of index 2. Let $\chi$ be a generalized character taking root of unity values on $G^{\#}$. There exists a subset $\pi$ of $\pi(G)$ such that the restriction of $\chi$ to any $\pi$-subgroup [resp. $\pi'$-subgroup] $H$ takes the form $a\rho_H + \lambda$ [resp. $a\rho_H - \lambda$] for some $a \in \Z$ and $\lambda \in \Irr(H)$. 
		Further, if $x \in G^{\#}$ is neither a $\pi$-element nor a $\pi'$-element, then either $x_\pi$ or $x_{\pi'}$ is an involution.
		
		Conversely, assume that the Sylow 2-subgroups of $G$ have no cyclic subgroup of index 2. Further, assume there exists $\pi \subseteq \pi(G)$ with the following property: For each $x \in G^\#$ which is neither a $\pi$-element nor a $\pi'$-element, either $x_\pi$ or $x_{\pi'}$ is an involution. There then exists a generalized character $\chi$ of $G$ such that $\chi(x) = 1$ [resp. $-1$] for each nonidentity $\pi$-element [resp. $\pi'$-element].
	\end{theorem}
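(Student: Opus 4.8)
For the direct half I would start from the observation that on any subgroup $H$ for which $\chi$ takes root of unity values on $H^\#$ — in particular on any $\pi$- or $\pi'$-subgroup and on any cyclic subgroup — the inner product $[\chi|_H,\chi|_H] = \frac{1}{|H|}\bigl(\chi(1)^2 + |H| - 1\bigr)$ is a nonnegative integer, so that $|H|$ divides $\chi(1)^2 - 1$. Since $\rho_H$ vanishes on $H^\#$, for every integer $a$ the virtual character $\chi|_H - a\rho_H$ agrees with $\chi$ off the identity and has squared norm $\frac{1}{|H|}\bigl((\chi(1) - a|H|)^2 + |H| - 1\bigr)$, which equals $1$ precisely when $\chi(1) \equiv \pm 1 \pmod{|H|}$; in that case $\chi|_H - a\rho_H$ is a norm-one generalized character of degree $\pm 1$, hence $\pm\lambda$ for a linear $\lambda \in \Irr(H)$, giving the asserted shape $a\rho_H \pm \lambda$. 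Thus the direct half reduces to promoting the congruence $\chi(1)^2 \equiv 1$ to $\chi(1) \equiv \pm 1$ with a sign that is consistent across all subgroups involving a fixed prime, and then setting $\pi$ to be the set of primes that carry the sign $+1$.

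This promotion is immediate at odd primes, where the unit group modulo $p^k$ is cyclic and $\chi(1)^2 \equiv 1 \pmod{p^k}$ already forces $\chi(1) \equiv \pm 1 \pmod{p^k}$ with an unambiguous sign $\epsilon_p$. The genuine obstacle is the prime $2$: modulo $2^k$ with $k \ge 3$ the congruence $\chi(1)^2 \equiv 1$ also admits the roots $2^{k-1} \pm 1$, and these would make $\chi|_S - a\rho_S$ a generalized character of integer norm greater than $1$ rather than a single $\pm\lambda$, so the norm bookkeeping alone does not suffice. Excluding them is exactly where the hypothesis on the Sylow $2$-subgroup $S$ is used: I would set $\theta = \chi|_S - a\rho_S$, a generalized character with root of unity values on $S^\#$ and small degree, and argue by induction on $|S|$ that $\theta = \pm\lambda$, using the absence of a cyclic subgroup of index $2$ to control the restrictions of $\theta$ to the maximal subgroups of $S$. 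Once the signs are fixed and $\pi$ is defined, the claim about a mixed element $x = x_\pi x_{\pi'}$ follows by running the same norm analysis on the cyclic group $\langle x\rangle = \langle x_\pi\rangle \times \langle x_{\pi'}\rangle$: the value of $\chi$ is forced to be $\equiv 1$ on the $\pi$-part and $\equiv -1$ on the $\pi'$-part, and these can be reconciled only if one of the two parts is an involution.

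For the converse I would construct $\chi$ and certify that it is a generalized character through Brauer's characterization, which states that a class function is a generalized character if and only if its restriction to every elementary subgroup is one. First I prescribe the class function with value $1$ on nontrivial $\pi$-elements and $-1$ on nontrivial $\pi'$-elements, choosing the values on the identity and on mixed elements compatibly with the involution hypothesis. Then, for an elementary subgroup $E = P \times C$ with $P$ a $p$-group and $C$ cyclic of coprime order, I would exhibit the restriction as an integral combination of $\rho_E$ and linear characters of $E$: for any mixed element of $E$ the hypothesis makes one of its $\pi$- and $\pi'$-parts an involution, which pins down the prescribed value so that the restriction coincides with a genuine generalized character of $E$. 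The main difficulty of the whole proof lies here — in checking, uniformly over all elementary subgroups at once, that the globally prescribed values have integral inner product with each $\mu \in \Irr(E)$; the involution condition is precisely the combinatorial input that produces the cancellations needed for this integrality.
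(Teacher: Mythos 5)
A preliminary remark: the paper you were given does not prove this statement at all --- it is quoted as background from Robinson's article \cite{R} --- so there is no internal proof to compare against, and your sketch must stand on its own. Its opening reduction is sound: integrality of $[\chi_H,\chi_H]=\frac{1}{|H|}\left(\chi(1)^2+|H|-1\right)$ does give $|H|\mid \chi(1)^2-1$, and for odd $|H|$ the factors $\chi(1)\mp 1$ share no odd prime, so a well-defined sign $\epsilon_p$ exists for each odd prime and the norm-one computation yields $a\rho_H\pm\lambda$ with $\lambda$ linear on odd-order subgroups. But the step you delegate to ``induction on $|S|$'' is precisely the technical heart of Robinson's theorem, and the induction as you describe it would not run: the property of having no cyclic subgroup of index $2$ is not inherited by maximal subgroups. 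For instance $S=C_4\times C_4$ has no cyclic subgroup of index $2$, yet its maximal subgroup $C_4\times C_2$ does; so you cannot ``control the restrictions of $\theta$ to the maximal subgroups of $S$'' by appealing to the same hypothesis one level down. Excluding the spurious square roots $2^{k-1}\pm 1$ of $1$ modulo $2^k$ requires a genuine structural analysis of generalized characters of $2$-groups with root-of-unity values (this is where the cyclic, dihedral, quaternion and semidihedral exceptions are eliminated), and your sketch contains none of it.

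Two further steps are gaps as written. For the mixed-element claim, ``running the same norm analysis'' on $\langle x\rangle$ proves nothing by itself: if $\chi(1)\equiv 1\pmod{m}$ and $\chi(1)\equiv -1\pmod{n}$ with $m=o(x_\pi)$ and $n=o(x_{\pi'})$ coprime, then $mn\mid(\chi(1)-1)(\chi(1)+1)$ holds automatically, so norm integrality is satisfied and there is no tension to ``reconcile.'' The missing ingredient is a degree-versus-norm bound: choosing $a$ so that $c:=\chi(1)-a\,mn$ has $|c|\le mn/2$, the generalized character $\theta=\chi_{\langle x\rangle}-a\rho_{\langle x\rangle}$ is an integral combination of linear characters, whence $|c|=|\theta(1)|\le\sum_\lambda|c_\lambda|\le\sum_\lambda c_\lambda^2=[\theta,\theta]=\frac{c^2+mn-1}{mn}$; this forces $|c|\le 1$, and then $c\equiv 1\pmod{m}$, $c\equiv -1\pmod{n}$ give $n\le 2$ or $m\le 2$, i.e.\ one of the two parts is an involution. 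This repair is elementary but absent from your text. Finally, in the converse you explicitly leave unproved what you yourself identify as the main difficulty --- the integrality of the inner products of the prescribed class function with every $\mu\in\Irr(E)$, uniformly over elementary subgroups $E$ --- and you specify neither the prescribed values on mixed elements, nor the Chinese-remainder choice of degree subject to the $2$-adic constraints, nor where the Sylow-$2$ hypothesis enters the converse at all. As it stands your proposal is a reasonable strategic outline of Robinson's theorem whose three hardest steps remain declarations of intent rather than arguments.
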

	
	Much earlier in \cite{R2}, Robinson made the following observation.
	
	\begin{theorem}[Robinson, 1999]\label{T:Rob2}
		Let $\chi$ be a generalized character of a group $G$ such that $\chi(x)$ is either zero or a root of unity for each $x \in G^{\#}$, then exactly one of the three following cases occurs:
		\begin{itemize}
			\item[(i)] $\chi$ is an integer multiple of the regular character $\rho_G$.
			\item[(ii)] $\chi(x)$ is a root of unity for all $x \in G^{\#}$. In particular, if the Sylow 2-subgroups of $G$ have no cyclic subgroup of index 2, then the main result of \cite{R} applies.
			\item[(iii)] There exists a nonempty, proper subset $\pi$ of $\pi(G)$ such that every element of $G^{\#}$ is either a $\pi$-element or a $\pi'$-element. 
		\end{itemize}
		Moreover, if $G$ is any group satisfying the condition in (iii), there exists a generalized character of $G$ whose values on $\pi$-elements [resp. $\pi'$-elements] of $G^{\#}$ are 0 [resp. 1]. 
	\end{theorem}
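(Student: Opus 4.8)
The plan is to control the vanishing set of $\chi$ using the standard arithmetic congruence for generalized characters: for any prime $p$, any prime ideal $\mathfrak{p}$ lying over $p$ in the ring of algebraic integers, and any $x \in G$, one has $\chi(x) \equiv \chi(x_{p'}) \pmod{\mathfrak{p}}$, since the $p$-part contributes eigenvalues that are $p$-power roots of unity, all congruent to $1$ modulo $\mathfrak{p}$. The crucial point is that a root of unity is a unit, so it never lies in $\mathfrak{p}$. Writing $d = \chi(1)$ and $\sigma = \{p \in \pi(G) : p \mid d\}$ (with the convention $\sigma = \pi(G)$ when $d = 0$), I would first record the behaviour of $\chi$ on prime-power elements: if $p \in \sigma$ then for every nonidentity $p$-element $y$ we get $\chi(y) \equiv d \equiv 0 \pmod{\mathfrak{p}}$, forcing $\chi(y) = 0$, whereas if $p \notin \sigma$ then $\chi(y) \equiv d \not\equiv 0$, forcing $\chi(y) \ne 0$. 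The same congruence gives a stripping principle: for $x \ne 1$ and any prime $p$ with $x_{p'} \ne 1$, one has $\chi(x) = 0$ if and only if $\chi(x_{p'}) = 0$.

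The heart of the argument, and the step I expect to be the main obstacle, is the claim that every element of $G$ is a $\sigma$-element or a $\sigma'$-element. I would argue by contradiction: a mixed element would, after passing to the appropriate powers, produce an element $w = x_p x_q$ of order $p^a q^b$ with $p \in \sigma$ and $q \in \sigma'$. Now I chain the congruence at two different primes. Since $w_{q'} = x_p$ and $\chi(x_p) = 0$ by the previous paragraph, the congruence at $q$ gives $\chi(w) \in \mathfrak{q}$, whence $\chi(w) = 0$ because $w \ne 1$. But then $w_{p'} = x_q$ and the congruence at $p$ gives $\chi(x_q) \in \mathfrak{p}$, so $\chi(x_q) = 0$, contradicting the fact that $x_q$ is a nonidentity $q$-element with $q \notin \sigma$. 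Isolating the two-prime reduction and the order in which the two local congruences are applied is the delicate part; everything else is bookkeeping.

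With the no-mixed-element claim in hand, the trichotomy falls out according to the size of $\sigma$, which I would organize by the number $N = |\{x \in G^{\#} : \chi(x) \ne 0\}|$. If $\sigma = \pi(G)$ (in particular if $d = 0$), an induction on $|x|$ via the stripping principle shows every nonidentity element is a zero, so $N = 0$, $\chi$ vanishes off the identity, and hence $\chi = (d/|G|)\rho_G$ is an integral multiple of $\rho_G$, which is case (i). If $\sigma = \emptyset$, the stripping principle instead reduces any hypothetical nonidentity zero to a prime-power zero, which cannot exist, so $\chi$ never vanishes on $G^{\#}$ and $N = |G| - 1$, which is case (ii). In the remaining case $\emptyset \ne \sigma \subsetneq \pi(G)$ one has $0 < N < |G| - 1$ and the claim provides the required separation with $\pi = \sigma$, which is case (iii). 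These three regimes of $\sigma$ are mutually exclusive and exhaustive, which yields the ``exactly one'' assertion.

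For the converse I would invoke Brauer's characterization of generalized characters: it suffices to produce a class function with the prescribed values whose restriction to every elementary subgroup is a generalized character. Here condition (iii) is decisive, since an elementary subgroup is nilpotent, hence a direct product of its Sylow subgroups, and if it met both $\pi$ and $\pi'$ it would contain a mixed element; thus every elementary subgroup $E$ is a $\pi$-group or a $\pi'$-group. Let $\chi$ be the class function taking the value $1$ on every $\pi'$-element of $G^{\#}$, the value $0$ on every $\pi$-element of $G^{\#}$, and an as-yet-unspecified integer value $D$ at the identity. If $E$ is a $\pi$-group then $\chi|_E = (D/|E|)\rho_E$, a generalized character precisely when $|E| \mid D$; if $E$ is a $\pi'$-group then $\chi|_E = \mathbf{1}_E + ((D-1)/|E|)\rho_E$, a generalized character precisely when $|E| \mid D - 1$. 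Since $|G|_\pi$ and $|G|_{\pi'}$ are coprime, the Chinese Remainder Theorem furnishes an integer $D$ with $D \equiv 0 \pmod{|G|_\pi}$ and $D \equiv 1 \pmod{|G|_{\pi'}}$; for this choice all the local conditions hold simultaneously, so $\chi$ is a generalized character with the desired values. Choosing the identity value so that every local divisibility holds at once is the only subtlety in this direction.
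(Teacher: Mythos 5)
The paper does not prove this statement: it is imported verbatim from Robinson \cite{R2}, so there is no in-paper proof to compare against, and your argument has to stand on its own. It does. The engine of your forward direction, the congruence $\chi(x) \equiv \chi(x_{p'}) \pmod{\mathfrak{p}}$ for $\mathfrak{p}$ lying over $p$, is correct (restrict to $\langle x \rangle$, write $\chi$ as a $\Z$-combination of linear characters, and use $\zeta_{p^k} \equiv 1 \pmod{\mathfrak{p}}$), and your exploitation of it is exactly right: since roots of unity are units, membership of $\chi(x)$ in $\mathfrak{p}$ forces $\chi(x)=0$, which gives both the dichotomy on prime-power elements according to whether $p \mid \chi(1)$ and the stripping principle. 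The two-prime chaining that kills a mixed element $w = x_p x_q$ (first conclude $\chi(w)=0$ via the congruence at $q$, then propagate to $\chi(x_q)=0$ via the congruence at $p$) is the genuinely clever step and it is sound; note it only needs $w \neq 1$, which holds. Your converse is precisely the device the paper itself uses in its proof of Proposition \ref{P:6}: define the class function by its values on $\pi$- and $\pi'$-elements, pick the identity value by the Chinese Remainder Theorem so that $D \equiv 0 \Mod{|G|_\pi}$ and $D \equiv 1 \Mod{|G|_{\pi'}}$, and check restrictions to Brauer elementary subgroups, each of which is a $\pi$- or $\pi'$-group by nilpotency; so on this half you have independently reconstructed the paper's own technique.

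Two presentational points, neither a gap. First, in the regime $\sigma = \pi(G)$ you assert $\chi = (d/|G|)\rho_G$ is an \emph{integer} multiple; the integrality deserves one line, e.g.\ $d/|G| = [\chi, 1_G] \in \Z$ because $\chi$ is a generalized character vanishing on $G^{\#}$. Second, the ``exactly one'' assertion as literally worded in the theorem is slightly loose (a group with disconnected prime graph admitting a nowhere-vanishing $\chi$ satisfies both (ii) and (iii) as stated); your reading --- partitioning by the vanishing locus of $\chi$ on $G^{\#}$, equivalently by whether $\sigma$ is all of $\pi(G)$, empty, or proper nonempty --- is the intended one and makes the trichotomy genuinely exclusive and exhaustive, so you should state that interpretive convention explicitly rather than leave it implicit.
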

	
	The third case above can be rephrased in terms of the prime graph of $G$, denoted $\Gamma(G)$. The vertex set of this graph is given by $\pi(G)$, and two vertices, $p$ and $q$, of $\pi(G)$ are linked in $\Gamma(G)$ iff there exists an element of order $pq$.  We obtain the following equivalent rephrasing as an immediate consequence of the definition of $\Gamma(G)$:
	
	\begin{corollary}\label{C:3}
		Let $G$ be a group. The prime graph $\Gamma(G)$ is disconnected if and only if there exists a generalized character $\chi$ whose values on $G^{\#}$ are either zero or roots of unity where $\chi$ is not of the form given in cases (i) and (ii) in the above theorem. 
	\end{corollary}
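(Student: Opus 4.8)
The plan is to show that the existence of a generalized character $\chi$ of the stated type reduces, via Theorem \ref{T:Rob2}, to the purely group-theoretic condition (iii), and then to recognize that (iii) is precisely the disconnectedness of $\Gamma(G)$. I would therefore split the argument into a character-theoretic reduction and a combinatorial equivalence, chaining them to obtain the corollary.

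First I would invoke Theorem \ref{T:Rob2}: if $\chi$ takes only $0$ and roots of unity as values on $G^{\#}$, then exactly one of (i), (ii), (iii) holds. Since (i) and (ii) describe $\chi$ itself, a $\chi$ that is neither a multiple of $\rho_G$ nor root-of-unity-valued on all of $G^{\#}$ forces case (iii), that is, it forces a nonempty proper $\pi \subseteq \pi(G)$ with every element of $G^{\#}$ a $\pi$-element or a $\pi'$-element. Conversely, the ``moreover'' clause of Theorem \ref{T:Rob2} supplies, whenever $G$ satisfies (iii), a generalized character taking value $0$ on nonidentity $\pi$-elements and $1$ on nonidentity $\pi'$-elements. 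I would then verify this is a genuine witness: because $\pi$ and $\pi' = \pi(G) \setminus \pi$ are both nonempty, Cauchy's theorem yields nonidentity $\pi$-elements (on which $\chi = 0$, so $\chi$ is not of form (ii), as $0$ is not a root of unity) and nonidentity $\pi'$-elements (on which $\chi = 1 \neq 0$, so $\chi$ does not vanish off the identity and hence is not of form (i)). Thus the existence of such a $\chi$ is equivalent to $G$ satisfying (iii).

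It then remains to show that (iii) holds exactly when $\Gamma(G)$ is disconnected. For the forward direction, suppose (iii) holds with witness $\pi$; if some $p \in \pi$ and $q \in \pi'$ were joined by an edge there would be an element of order $pq$, which is neither a $\pi$-element nor a $\pi'$-element, a contradiction, so no edge crosses the partition $\pi(G) = \pi \sqcup \pi'$ into nonempty parts and $\Gamma(G)$ is disconnected. For the converse, a disconnection of $\Gamma(G)$ yields a partition $\pi(G) = \pi \sqcup \pi'$ into nonempty parts with no crossing edges; if some $x \in G^{\#}$ had order divisible by both a prime $p \in \pi$ and a prime $q \in \pi'$, then the cyclic group $\langle x \rangle$ would contain commuting elements of orders $p$ and $q$ whose product has order $pq$, producing a forbidden crossing edge, so every element is a $\pi$-element or $\pi'$-element and (iii) holds.

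Chaining these equivalences gives the corollary. I do not anticipate a substantive obstacle, since the combinatorial equivalence is the ``immediate consequence of the definition of $\Gamma(G)$'' noted before the statement; the only point demanding genuine care is the verification that the witness character is neither a multiple of $\rho_G$ nor root-of-unity-valued everywhere on $G^{\#}$, which is exactly why I would explicitly produce nonidentity $\pi$- and $\pi'$-elements via Cauchy's theorem.
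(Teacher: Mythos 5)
Your proposal is correct and follows the same route the paper intends: the paper offers no separate proof, presenting Corollary~\ref{C:3} as an immediate rephrasing of case (iii) of Theorem~\ref{T:Rob2} (together with its ``moreover'' clause) via the definition of $\Gamma(G)$, which is exactly the equivalence you chain together. You have simply made the routine verifications explicit --- the order-$pq$ element argument in both directions of the combinatorial equivalence, and the Cauchy-theorem check that the witness character avoids forms (i) and (ii) --- all of which are sound.
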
 
	
	Groups having a disconnected prime graph have rather restricted structure. Using the classification of finite simple groups, results from \cite{GK} and \cite{W} imply the following: If $\Gamma(G)$ is disconnected, then either $G$ is a Frobenius group, a 2-Frobenius group, or else has exactly one nonabelian composition factor.

	Theorem \ref{T:Rob2} is also applicable to certain classes of permutation groups. A permutation group $G$ acting on a set $\Omega$ is said to have \emph{minimal degree} $f$ if each $g \in G^{\#}$ fixes at most $|\Omega| - f$ points in $\Omega$. The transitive permutation groups of minimal degree $|\Omega|$ are precisely those which act regularly. Those of minimal degree $|\Omega| - 1$ include the class of Frobenius groups. Those of minimal degree $|\Omega| - 2$ include the class of Zassenhaus groups, i.e., doubly transitive groups having no nonidentity element fixing three points simultaneously. These Zassenhaus groups include the groups $\PSL(2,q)$ for $q$ odd along with the Suzuki groups, $\Sz(q)$.

	Now, assume that $G$ acts on $\Omega$ with minimal degree at least $|\Omega| - 2$. Then, $G$ has a permutation character of the form $1 + \chi$, where $1$ denotes the trivial character of $G$ and $\chi$ takes values in the set $\{-1,0,1\}$.  With this in mind, we obtain the following as a consequence of Theorem \ref{T:Rob2}.
	\begin{corollary}\label{C:4}
		Let $G$ be a transitive permutation group acting on $\Omega$ with minimal degree $|\Omega| - 2$. One of the following occurs:
		\begin{itemize}
			\item[(i)] There exists a nontrivial, proper subset $\pi$ of $\pi(G)$ such that every element of $G^{\#}$ is either a $\pi$-element or a $\pi'$-element. In particular, $\Cent(G) = 1$.
			\item[(ii)] $|G|$ is even, and there is no $g \in G^{\#}$ leaving exactly one point in $\Omega$ fixed. 
		\end{itemize}
	\end{corollary}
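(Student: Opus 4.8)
The plan is to apply Theorem \ref{T:Rob2} to the nonprincipal part of the permutation character. Since $G$ acts transitively, its permutation character decomposes as $\theta = 1 + \chi$, where $1$ is the trivial character, $\chi$ is a genuine character with $\langle \chi, 1 \rangle = 0$, and $\theta(g)$ equals the number of points of $\Omega$ fixed by $g$. Because $G$ has minimal degree $|\Omega| - 2$, every $g \in G^{\#}$ fixes at most two points, so $\theta(g) \in \{0,1,2\}$ and hence $\chi(g) = \theta(g) - 1 \in \{-1,0,1\}$ for all $g \in G^{\#}$. In particular each such value is either zero or a root of unity, so Theorem \ref{T:Rob2} applies to $\chi$ and places us in exactly one of its three cases.

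First I would dispose of case (i) of Theorem \ref{T:Rob2}. If $\chi = a\rho_G$, then $\chi(g) = 0$ for every $g \in G^{\#}$, so $\theta(g) = 1$ and every nonidentity element fixes exactly one point; this contradicts the fact that the minimal degree is \emph{exactly} $|\Omega| - 2$, which guarantees an element fixing two points. Next, case (iii) of Theorem \ref{T:Rob2} is precisely conclusion (i) of the corollary, so it remains only to verify the assertion $\Cent(G) = 1$ there. For this I would argue by contradiction: a nontrivial central $z$ is, say, a $\pi$-element, and choosing via Cauchy's theorem a $\pi'$-element $y$ (possible since $\pi$ is proper), the product $zy = yz$ has order divisible by primes of both $\pi$ and $\pi'$, so it is neither a $\pi$- nor a $\pi'$-element, violating the defining property of case (iii). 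Finally, in case (ii) of Theorem \ref{T:Rob2} every value $\chi(g)$ with $g \in G^{\#}$ is a root of unity lying in $\{-1,0,1\}$, hence equals $\pm 1$; thus $\theta(g) \in \{0,2\}$ and no nonidentity element fixes exactly one point, which is the second assertion of conclusion (ii).

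The main obstacle is establishing that $|G|$ is even in this last case, and here I would exploit Robinson's $2011$ result (Theorem \ref{T:Rob1}) through the absence of involutions. Suppose, for contradiction, that $|G|$ is odd. Then the Sylow $2$-subgroups are trivial and so vacuously have no cyclic subgroup of index $2$, so Theorem \ref{T:Rob1} applies to $\chi$ and yields a set $\pi \subseteq \pi(G)$ with the stated restriction behavior; moreover, since $G$ has no involutions, the final clause of that theorem forces every $x \in G^{\#}$ to be a $\pi$-element or a $\pi'$-element. Were $\pi$ a nonempty proper subset of $\pi(G)$, this would be exactly case (iii) of Theorem \ref{T:Rob2}, contradicting the mutual exclusivity of its cases; hence $\pi$ is empty or all of $\pi(G)$, and correspondingly $\chi = a\rho_G \pm \lambda$ for some $a \in \Z$ and $\lambda \in \Irr(G)$. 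The remaining step is a short computation: combining $\langle \chi, 1 \rangle = 0$, the identity $\chi(1) = |\Omega| - 1$, and $|\chi(g)| = 1$ on $G^{\#}$ together with the orthogonality relations, each surviving possibility forces a regular action (contradicting the minimal degree), a non-positive value of $\chi(1)$, or $|\Omega| = 2$ — and a transitive action on two points already makes $|G|$ even. Each outcome contradicts the standing hypotheses, so $|G|$ must be even, completing conclusion (ii). I expect the bookkeeping in this final case analysis to be the most delicate part, since it is precisely where the parity of $|G|$ is extracted.
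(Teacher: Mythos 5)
Your proposal is correct and follows the same overall strategy as the paper --- apply Theorem \ref{T:Rob2} to the nonprincipal constituent $\chi$ of the permutation character, identify case (iii) with conclusion (i), and extract the parity claim in case (ii) from Theorem \ref{T:Rob1} --- but your execution differs at two points, and in both you supply detail the paper compresses. To rule out $\chi$ being a multiple of $\rho_G$, the paper uses the orbit counting lemma (transitivity gives $\sum_{g \in G}\chi(g) = 0$ while $\chi(1) = |\Omega| - 1 > 0$, so $\chi(g) = -1$ somewhere), whereas you invoke exactness of the minimal degree to produce an element fixing two points; both work, though the orbit-counting route is independent of whether one reads ``minimal degree $|\Omega|-2$'' as an exact value or merely as an upper bound on fixed points. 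Your verification of $\Cent(G) = 1$ (a central $\pi$-element times a $\pi'$-element of prime order is neither a $\pi$- nor a $\pi'$-element) is correct and is left implicit in the paper. Most notably, the paper's treatment of evenness is the single assertion that Theorem \ref{T:Rob1} ``implies that $G$ has a nontrivial Sylow 2-subgroup,'' while you carry out the actual endgame: odd order makes the Sylow hypothesis vacuous, absence of involutions forces every element to be a $\pi$- or $\pi'$-element, and the degenerate choices $\pi = \emptyset$ or $\pi = \pi(G)$ give $\chi = a\rho_G \pm \lambda$ on $G$ itself, which orthogonality reduces to a regular action (excluded by the exact minimal degree), a nonpositive degree, or $|\Omega| = 2$; your proof is thus more complete than the paper's at precisely its tersest point. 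One caveat: where $\pi$ is nonempty and proper, you claim a contradiction with the ``mutual exclusivity'' of the cases of Theorem \ref{T:Rob2}. Read literally that exclusivity is fragile --- the partition property of case (iii) can coexist with a generalized character taking root-of-unity values on all of $G^{\#}$ (consider the sign character of $S_3$), so case (iii) should be understood as describing the value pattern of $\chi$, not a group property incompatible with case (ii). The clean repair is immediate: in that situation conclusion (i) of the corollary holds outright, so the disjunction is satisfied without any appeal to exclusivity. With that one-line adjustment your argument is complete.
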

	
	\begin{proof}
		Let the permutation character of the action be denoted by $1 + \chi$ for some character $\chi$. The character $\chi$ takes values in the set $\{-1,0,1\}$ as mentioned above, so Theorem \ref{T:Rob2} is applicable. It follows as a consequence of the orbit counting lemma that $\chi$ takes the value $-1$ on at least one element of $G^{\#}$, so $\chi$ is not a multiple of the regular character. 
		
		If $\chi$ is nonzero on $G^{\#}$, then it follows that there exists no $g \in G^{\#}$ fixing exactly one element of $\Omega$. Thus, $\chi$ is of the form considered in Theorem \ref{T:Rob1}, so if the first case of this corollary does not occur, Theorem \ref{T:Rob1} implies that $G$ has a nontrivial Sylow 2-subgroup, and in particular, $|G|$ is even.
		
		The only case left unconsidered is the case where $\chi$ takes a mixture of zero and root of unity values on $G^{\#}$, but this is precisely the situation of case (iii) in Theorem \ref{T:Rob2}, so the result holds.
	\end{proof}
	
	Permutation groups acting with relatively large minimal degree have received extensive study for their applications to Riemann surfaces, and much is known about the structure of such groups (cf. \cite[Theorem 1.1]{HW}). In particular, the Zassenhaus groups provide us with many irreducible examples of the characters considered in case (iii) of Theorem \ref{T:Rob2}, since in this case, the group acts doubly transitively, and so, the constituent $\chi$ in the permutation character $1 + \chi$ is irreducible. Further, examples of that described in case (ii) in the above can occur and with nontrivial center. For instance, consider the transitive action of $\SL(2,3)$ on the cosets of one of its Sylow 3-subgroups. One verifies that under this action, the constituent $\chi$ of the permutation character $1 + \chi$ takes values $\pm 1$ off the identity, and so, no element fixes exactly one point.
	
	We wonder more generally about generalized characters whose values on nonidentity elements are sums of at most $k$ roots of unity where $k$ is relatively small. For the purpose of this note, we shall only be concerned with the case where $k = 2$. Our first main result in this regard concerns abelian groups. 
	
	\begin{theorem}[Main Theorem]\label{T:B}
		Let $G$ be an abelian group having a generalized character $\chi$ whose values on $G^{\#}$ are sums of at most 2 roots of unity. Then, we have that $\chi = a\rho_G + \delta_1\lambda_1 + \delta_2\lambda_2$ for  $a \in \Z$, $\delta_i \in \{-1,0,1\}$, and $\lambda_i \in \Irr(G)$, except possibly in the case where either $2, 3,$ or $5 \in \pi(G)$. 
		
		If $|G|$ is divisible by either 2,3, or 5, $\chi$ may take one of four possible forms:
		\begin{itemize}
			\item[(i)] $\chi = a\rho_G \pm (\lambda_1 + \lambda_2 + \lambda_3)$,
			\item[(ii)] $\chi = a\rho_G \pm (\lambda_1 + \lambda_2 - \lambda_3)$,
			\item[(iii)] $\chi = a\rho_G \pm (\lambda_1 + \lambda_2 + \lambda_3 + \lambda_4)$, or
			\item[(iv)] $\chi = a\rho_G \pm (\lambda_1 + \lambda_2 + \lambda_3 - \lambda_4)$,
		\end{itemize}
		for some $a \in \Z$ and distinct $\lambda_i \in \Irr(G)$, except possibly in the case that $|G| \leq 21$, where we might have that $\chi = a\rho_G \pm \sum_{i=1}^k \lambda_i$ for distinct $\lambda_i \in \Irr(G)$ and $5 \leq k \leq 7$. 
	\end{theorem}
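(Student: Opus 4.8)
The plan is to pass to the reduced generalized character $\psi = \chi - a\rho_G$, where $a$ is the most frequently occurring multiplicity among the integers $a_\lambda = \langle \chi,\lambda\rangle$, $\lambda\in\Irr(G)$. Since $\rho_G$ vanishes off the identity, $\psi(g)=\chi(g)$ for every $g\in G^{\#}$, so $\psi$ inherits the hypothesis; writing $b_\lambda = a_\lambda - a$, the task becomes to show that the integer vector $(b_\lambda)$ has very few nonzero entries, each of small absolute value, with the precise shapes listed.

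First I would extract a second-moment (variance) estimate. Since $G$ is abelian all irreducible characters are linear, and orthonormality gives $\sum_\lambda a_\lambda^2 = \langle\chi,\chi\rangle = |G|^{-1}\big(\chi(1)^2 + \sum_{g\in G^{\#}}|\chi(g)|^2\big)$. Each $\chi(g)$ with $g\neq 1$ is a sum of at most two roots of unity, hence $|\chi(g)|^2 \le 4$; therefore $\sum_\lambda a_\lambda^2 - |G|^{-1}\chi(1)^2 < 4$. Rewriting the left-hand side as the pairwise variance $\tfrac{1}{2|G|}\sum_{\lambda,\mu}(a_\lambda - a_\mu)^2$, I obtain $\sum_{\lambda,\mu}(a_\lambda-a_\mu)^2 < 8|G|$, which forces the multiplicities to cluster tightly about the mode $a$. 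Comparing each $b_\lambda$ with the $(|G| - m)$ characters whose multiplicity equals $a$, where $m = \#\{\lambda: b_\lambda\neq0\}$, yields $2(|G|-m)\sum_\lambda b_\lambda^2 < 8|G|$, whence $\sum_\lambda b_\lambda^2 < 4|G|/(|G|-m)$. As each nonzero $b_\lambda$ contributes at least $1$, this gives $m(|G|-m) < 4|G|$; together with the fact that the clustering keeps the modal class large (so that we are on the branch where $|G|-m$ is large), this quadratic inequality forces $m \le 4$, $\sum_\lambda b_\lambda^2 \le 4$, and in particular $|b_\lambda|\le 2$ throughout, provided $|G|$ is large enough --- which is exactly where the threshold $|G|\le 21$ is carved out.

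Next I would feed the reduced vector back into the hypothesis through the arithmetic of cyclotomic integers. For each $g\in G^{\#}$ the quantity $\psi(g) = \sum_{\lambda} b_\lambda\,\lambda(g)$ is, on one hand, a $\Z$-combination of at most $m\le 4$ roots of unity and, on the other hand, equal to a sum of at most two roots of unity; subtracting, one obtains a vanishing sum of at most $m+2\le 6$ roots of unity valid at every $g$. By the Lam--Leung (and Mann) description of minimal vanishing sums of roots of unity, whose lengths are $\N_0$-combinations of the primes dividing the common order, no nontrivial collapse of such short length can occur unless one of the primes $2,3,5$ is available, that is, unless $2,3,5\in\pi(G)$. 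This is what rules out three or four nonzero $b_\lambda$ when $\pi(G)$ avoids $\{2,3,5\}$ and leaves only the configurations with $\sum_\lambda|b_\lambda|\le 2$, namely $\chi = a\rho_G + \delta_1\lambda_1 + \delta_2\lambda_2$. When $2,3,5$ are present, the same length bookkeeping, applied uniformly over all $g$, promotes the local relations to a single character identity and pins the admissible three- and four-term patterns to exactly the sign shapes (i)--(iv), simultaneously excluding a coefficient equal to $\pm2$ in the multi-term cases (such an entry would violate $\sum_\lambda b_\lambda^2\le4$ against another nonzero coefficient) and excluding balanced patterns such as $\lambda_1+\lambda_2-\lambda_3-\lambda_4$.

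The main obstacle I anticipate is precisely this last step: converting the family of per-element vanishing relations into one global statement and proving that only the listed sign patterns are globally consistent --- that is, that a short vanishing relation holding at every $g\in G^{\#}$ is forced by an actual multiplicative relation among the $\lambda_i$ together with a single $\zeta_3$- or $\zeta_5$-collapse (or a $\pm$ cancellation supplied by the prime $2$). A secondary, purely finite obstacle is the range $|G|\le 21$, where the second-moment inequality is too weak to bound $m$; there I would fall back on the refined data $\sum_\lambda b_\lambda^2 = m$ and $\psi(1)=\pm m$ (all nonzero coefficients sharing one sign, as the root-of-unity analysis still shows) to recover $m(|G|-m)<4|G|$, and then dispatch the finitely many abelian groups of order at most $21$ by direct inspection, obtaining the stated weaker bound $5\le k\le 7$.
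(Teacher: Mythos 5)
Your architecture is essentially the paper's --- a second-moment bound pinning the multiplicities near a single value, followed by minimal-vanishing-sum analysis per element --- but the step where you weaken the counting to $m(|G|-m)<4|G|$ and defer the exclusion of the balanced pattern $\lambda_1+\lambda_2-\lambda_3-\lambda_4$ to the cyclotomic bookkeeping is a genuine gap: no per-element argument can exclude that pattern, because at any given $g$ its value may perfectly well be a sum of at most two roots of unity; the obstruction is global. Concretely, for $\psi=\lambda_1+\lambda_2-\lambda_3-\lambda_4$ one has $\psi(1)=0$ and $[\psi,\psi]=4$, so $\sum_{g\in G^{\#}}|\psi(g)|^2=4|G|>4(|G|-1)$, and some $g\in G^{\#}$ must satisfy $|\psi(g)|>2$ --- this is exactly the $\chi(1)^2/|G|$ term you correctly derived and then discarded when passing to the mode-comparison bound. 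The paper retains this information by leaving the regular-character shift as a free rational parameter $x$ and demanding that the quadratic inequality \eqref{main_ineq} in $x$ be solvable; the resulting discriminant condition $a_0a_1+a_1a_2+4a_0a_2\le 4(|G|-1)$ kills $a_0,a_2\ge 2$ outright. The repair is available inside your own framework (keep $\sum_\lambda b_\lambda^2<(\sum_\lambda b_\lambda)^2/|G|+4$ and check the finitely many integer patterns against it), but as written your plan assigns this exclusion to a step that cannot deliver it.

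Two further problems. First, the threshold: granting your variance estimate, distinct multiplicity values differ by at least $1$, so the modal class has size greater than $|G|-8$ and $m\le 7$; but then $m(|G|-m)<4|G|$ only forces $m\le 4$ once $|G|\ge 25$, so your plan leaves the orders $22\le |G|\le 24$ covered neither by the structural argument nor by the proposed inspection up to $21$. The sharp cutoff $21$, and the fact that in the exceptional range the $5\le k\le 7$ constituents all carry the same sign, come in the paper from the exact inequality \eqref{dumb-case} in the branch $a_2=0$ (namely $4(a_0+a_1-1)\ge a_0a_1$, whence $a_0\ge 5$ forces $a_1\le 16$), not from ``root-of-unity analysis'' as your last paragraph asserts. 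Second, in the cyclotomic step the two roots of unity $\epsilon_1,\epsilon_2$ representing $\chi(g)$ need not have order dividing $|G|$ (for instance $\zeta_3+\zeta_3^2=-1$ can represent a value even when $3\nmid |G|$), so the ``common order'' in the Lam--Leung length criterion involves primes outside $\pi(G)$; the conclusion must be that one of $2,3,5$ divides $|G|$, and reaching it requires tracking which terms of each vanishing sum have order dividing $o(g)$. The paper's case analysis does exactly this, and in the mixed-sign cases it additionally needs Lemma \ref{L:12} (three distinct linear characters of an odd-order abelian group take pairwise distinct values at some element) to rule out accidental two-term cancellations among the $\lambda_i(g)$ --- an ingredient your sketch omits and which does not follow from the length criterion alone.
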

	
	Given any generalized character whose values on nonidentity elements are sums of few roots of unity, we would intuitively expect that the generalized character would not differ too much from the regular character, and this result shows this to usually be the case under these circumstances, provided that the prime divisors of the group order are sufficiently large. We also mention that we have examples of each of the four outlier cases, showing that we do indeed need to make exceptions for these cases. For instance, in the case of (iv), take $G$ to be the cyclic group of order 12 and let the $\lambda_i$ be the four linear characters (in any order) whose kernels contain the subgroup of order 3. The cyclic group of order 15 has a counterexample of the type in case (i) and of the type in case (ii) as well. An example of type (iii) is also found in the cyclic group of order 9. Further, the requirement that $|G| \leq 21$ cannot be sharpened. By employing GAP \cite{Gap}, we found that one can choose five distinct linear characters of the cyclic group of order 21 whose values on the nonidentity elements are sums of two roots of unity. Similarly, there exists a sum of seven distinct irreducible characters of the cyclic group of order 15 satisfying this property. 
	
	The fact that the situation is somewhat controlled for abelian groups in the above theorem makes us wonder whether we can obtain a result analogous to Theorem 2. One direction turns out to be a fairly straightforward application of Brauer's Characterization of Characters. In fact, we prove something stronger.
	
	\begin{proposition}\label{P:6}
		If $G$ is a group and $\Gamma(G)$ has $k$ connected components, there exists $\chi \in \Z[\Irr(G)]$ whose set of values on $G^{\#}$ is precisely $\{a_i\}_{i=0}^{k-1}$ where the $a_i$ are distinct integers.
	\end{proposition}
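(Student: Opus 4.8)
The plan is to exhibit an explicit integer-valued class function and then verify, via Brauer's characterization of characters, that it is a generalized character. First I would record the structural fact underlying everything: if primes $p$ and $q$ both divide the order of some $x \in G$, then $\langle x\rangle$ contains an element of order $pq$, so $p$ and $q$ are adjacent in $\Gamma(G)$. Consequently every $x \in G^{\#}$ has all the prime divisors of its order inside a single connected component of $\Gamma(G)$. Writing $\pi_0, \dots, \pi_{k-1}$ for the prime sets of the $k$ components, this partitions $G^{\#}$ into pieces $X_0, \dots, X_{k-1}$, where $X_i$ consists of the nonidentity elements whose order is a $\pi_i$-number; each $X_i$ is nonempty by Cauchy's theorem applied to any prime of $\pi_i$.

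Next I would define the candidate: set $\chi(1) = 0$ and $\chi(x) = a_i := i\,|G|$ for $x \in X_i$, so that $\chi$ is integer-valued and its set of values on $G^{\#}$ is exactly the set of distinct integers $\{0, |G|, \dots, (k-1)|G|\}$. It then remains to show $\chi \in \Z[\Irr(G)]$, for which, by Brauer's characterization of characters, it suffices to check that $\chi$ restricts to a generalized character on every (Brauer) elementary subgroup $E = P\times C$, with $P$ a $p$-group and $C$ cyclic of order prime to $p$.

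The enabling observation is a \emph{monochromaticity} lemma: for any such $E$, the set $\pi(E)$ of primes dividing $|E|$ lies inside a single component $\pi_\gamma$. Indeed, any two primes of $\pi(C)$ are adjacent in $\Gamma(G)$ since $C$ is cyclic, and if $p \in \pi(E)$ together with some $\ell \in \pi(C)$, then choosing $u \in P$ of order $p$ and $w \in C$ of order $\ell$ produces $(u,w) \in E$ of order $p\ell$, forcing $p$ and $\ell$ into the same component; hence $\pi(E)$ cannot meet two components. Granting this, every nonidentity element of $E$ lies in $X_\gamma$, so $\chi|_E$ equals $a_\gamma$ on $E^{\#}$ and $0$ at the identity, and I would write $\chi|_E = a_\gamma 1_E - \tfrac{a_\gamma}{|E|}\rho_E$, where $1_E$ and $\rho_E$ denote the trivial and regular characters of $E$. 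Since $|E|$ divides $|G|$ and hence divides $a_\gamma = \gamma|G|$, the scalar $a_\gamma/|E|$ is an integer, so $\chi|_E$ is an integral combination of $1_E$ and $\rho_E$, i.e. a generalized character of $E$.

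I expect the main obstacle to be precisely the identity element: a naive indicator-type class function fails to be a generalized character on a single-colored elementary subgroup because $\mathbf 1_{\{1\}} = \tfrac{1}{|E|}\rho_E$ is not integral. The device of forcing each nonidentity value $a_i$ to be divisible by $|G|$ (while setting $\chi(1)=0$) is exactly what clears this denominator uniformly across all $E$, and the monochromaticity lemma is what guarantees there is only one such value to clear on each elementary subgroup. With these two points in hand, Brauer's characterization yields $\chi \in \Z[\Irr(G)]$, and since each $X_i$ is nonempty the value set on $G^{\#}$ is exactly $\{a_i\}_{i=0}^{k-1}$, completing the argument.
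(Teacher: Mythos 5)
Your proof is correct and takes essentially the same route as the paper: both arguments rest on Brauer's characterization of characters together with the key observation that every elementary subgroup has all of its prime divisors inside a single component of $\Gamma(G)$, so that the restriction of $\chi$ is an integral combination of the trivial and regular characters. The only difference is cosmetic: where the paper prescribes arbitrary distinct values $a_i$ and then chooses $\chi(1)$ via the Chinese Remainder Theorem (so that $\chi(1)\equiv a_i \Mod{|G|_{\pi_i}}$ makes the coefficient of the regular character integral), you hard-wire the divisibility by taking $a_i = i|G|$ and $\chi(1)=0$, trading the paper's slightly greater flexibility in the values for a CRT-free verification.
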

	
	\begin{proof}
		Denote the $k$ connected components of the prime graph as $\Gamma_i(G)$ for $i=0,\ldots,k-1$, and let each of the respective vertex sets be denoted as $\pi_i$. Every element of $G^{\#}$ is a $\pi_i$-element for some $i$. For a $\pi_i$-element $x \in G^{\#}$, define $\chi: G^{\#} \to \C$ by setting $\chi(x) = a_i$. It is clear that $\chi$ is invariant on the conjugacy classes of $G^{\#}$. It remains do determine a valid value that $\chi(1)$ can take.
		
		Now, by the Chinese Remainder Theorem, there exists an integer $a$ such that $a \equiv a_i \Mod{|G|_{\pi_i}}$ for each $i$. Define $\chi(1) = a$. Now, the restriction of $\chi$ to any $\pi_i$-subgroup $H$ takes the form $b\rho_H + a_i 1$ for some integer $b$. Also, every Brauer elementary subgroup is a $\pi_i$-group for some $i$ by our assumptions. Thus, $\chi$ is a generalized character of $G$ satisfying the desired property.
	\end{proof}
	
	We do not get a full converse to this theorem. Specifically, let $G$ be the semidirect product of the cyclic group of order 15, $C_{15}$, with the cyclic group of order 2, $C_2$, where the involution acts as the automorphism of inversion. The prime graph has two connected components since there exists a subgroup of order 15 and the elements of order two do not commute with the subgroup of order 15. Let $\lambda_1,\lambda_2 \in \Irr(C_{15})$ be the two nontrivial irreducible characters of $C_{15}$ whose kernels contain the cyclic group of order 5. Define $\chi(x) = \lambda_1(x) + \lambda_2(x) - 1$ for each $x \in C_{15}^{\#}$. We also take $\chi(x) = 0$ for each involution $x$. Lastly, define $\chi(1) = 16$. Brauer's Characterization of Characters implies that $\chi$ is a generalized character. This character is a sum of at most two roots of unity on each nonidentity element. In particular, there exists $x,y,z \in G^{\#}$ such that $\chi(x) = 0$, $\chi(y) = 1$, and $\chi(z) = -2$. Yet the prime graph has only two connected components: $\{2\}$ and $\{3,5\}$.
	
	The above paragraph leads us to the following question.
	
	\begin{question}\label{Q}
		Assume that $\chi \in \Z[\Irr(G)]$ and that the values of $\chi$ on $G^{\#}$ are sums of at most two roots of unity. Further assume that there exists $x,y,z \in G^{\#}$ such that $\chi(x) = 0$, $\chi(y)$ is a root of unity, and $\chi(z)$ is a sum of two roots of unity. Is the prime graph of $G$ guaranteed to be disconnected? 
	\end{question}
	\noindent Although we are unable to answer this question at the moment, we are however able to show in Theorem \ref{T:17} that under these circumstances, the graph becomes disconnected when the vertex corresponding to the prime 2 is thrown away.
	
	\section{Preliminaries}
	
	We begin by stating (without proof) a standard fact on sums of roots of unity.
	
	\begin{lemma}\label{L:8}
		Assume that $\sum_{i=1}^p \epsilon_i = 0$ for some prime $p$ and some $p$-power roots of unity $\epsilon_i$. Then, there exists a root of unity $\epsilon$ such that $\epsilon_i = \epsilon\zeta_{p}^i$ for each $i$. Also, if any sum of $p$-power roots of unity having $n$ terms is equal to zero, then $p \mid n$, and the terms of the sum can be partitioned into $n$ vanishing subsums having $p$ terms.
	\end{lemma}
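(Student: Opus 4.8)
The plan is to translate the vanishing-sum condition into a statement about divisibility of polynomials by the cyclotomic polynomial $\Phi_{p^m}$, and then exploit the very special ``gapped'' shape of that polynomial. First I would let $p^m$ be the largest order occurring among the roots $\epsilon_i$, so that every $\epsilon_i$ lies in $\Q_{p^m} = \Q(\zeta_{p^m})$ and can be written uniquely as $\zeta_{p^m}^{\,j}$ with $0 \le j < p^m$. Encoding the multiset $\{\epsilon_i\}$ by the polynomial $f(x) = \sum_{j=0}^{p^m-1} a_j x^j \in \Z[x]$, where $a_j$ counts the indices $i$ with $\epsilon_i = \zeta_{p^m}^{\,j}$, the crucial features are that $f$ has nonnegative integer coefficients, that $\deg f < p^m$, and that the hypothesis $\sum_i \epsilon_i = 0$ is exactly the statement $f(\zeta_{p^m}) = 0$, while the total number of terms is $n = \sum_j a_j = f(1)$.

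Next I would invoke minimality of the cyclotomic polynomial: since $\Phi_{p^m}$ is the monic minimal polynomial of $\zeta_{p^m}$ over $\Q$, the vanishing $f(\zeta_{p^m}) = 0$ forces $\Phi_{p^m} \mid f$, and because $\Phi_{p^m}$ is monic this divisibility already holds in $\Z[x]$, say $f = \Phi_{p^m}\, g$ with $g \in \Z[x]$. Comparing degrees, $\deg \Phi_{p^m} = \phi(p^m) = p^m - p^{m-1}$, so $\deg g < p^{m-1}$ and there is no wraparound to worry about. The key structural input is the explicit form $\Phi_{p^m}(x) = 1 + x^{p^{m-1}} + x^{2p^{m-1}} + \cdots + x^{(p-1)p^{m-1}}$, whose support is the arithmetic progression $\{\ell p^{m-1} : 0 \le \ell < p\}$.

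The heart of the argument, and the step I expect to be the main obstacle, is showing that the cofactor $g$ must itself have nonnegative coefficients. Writing $g = \sum_{j=0}^{p^{m-1}-1} c_j x^j$, one checks that the supports of the products $x^j \Phi_{p^m}(x)$ for distinct $j$ in the range $0 \le j < p^{m-1}$ are pairwise disjoint: every exponent appearing in $f$ is uniquely $\ell p^{m-1} + j$ with $0 \le \ell < p$ and $0 \le j < p^{m-1}$, this being nothing but division of the exponent by $p^{m-1}$. Consequently the coefficient of $x^{\ell p^{m-1}+j}$ in $f$ is simply $c_j$, independent of $\ell$, and the nonnegativity of the $a_j$ forces each $c_j \ge 0$. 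This disjointness of supports is precisely what the gapped shape of $\Phi_{p^m}$ buys us, and it is the nonobvious point on which everything rests.

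Finally I would read off the conclusion. Each monomial block $x^j \Phi_{p^m}(x)$ corresponds, via $\zeta_{p^m}^{\,p^{m-1}} = \zeta_p$, to the $p$ roots $\zeta_{p^m}^{\,j}\{1, \zeta_p, \zeta_p^2, \dots, \zeta_p^{p-1}\}$, whose sum is $\zeta_{p^m}^{\,j}\cdot 0 = 0$. Hence the original multiset partitions into $\sum_j c_j = n/p$ vanishing subsums, each having exactly $p$ terms; in particular $p \mid n$, which is the general statement. When $n = p$ we have $\sum_j c_j = 1$, so a single $c_{j_0}$ equals $1$ and the rest vanish; setting $\epsilon = \zeta_{p^m}^{\,j_0}$ and relabelling the indices gives $\epsilon_i = \epsilon\,\zeta_p^{\,i}$, the first assertion.
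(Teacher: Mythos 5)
Your proof is correct. Note that the paper itself states this lemma \emph{without} proof, citing it as a standard fact (with \cite{Bru} and \cite{LamLe} given for the harder two-prime analogue), so there is no in-paper argument to compare against; your argument is exactly the standard one: reduce to $f(\zeta_{p^m})=0$ with $f\in\Z[x]$ having nonnegative coefficients and degree below $p^m$, factor $f=\Phi_{p^m}\,g$ in $\Z[x]$ by minimality and monicity, and use the gapped form $\Phi_{p^m}(x)=\Phi_p\bigl(x^{p^{m-1}}\bigr)$ so that the supports of $x^j\Phi_{p^m}$ for $0\le j<p^{m-1}$ are disjoint, forcing $g$ to have nonnegative coefficients and the multiset of terms to split into $g(1)=n/p$ rotated copies of $1+\zeta_p+\cdots+\zeta_p^{p-1}$. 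Two small remarks: the degenerate case $m=0$ (all $\epsilon_i=1$) should be dismissed in a sentence, since then the sum cannot vanish unless $n=0$; and your count of $n/p$ vanishing subsums is the right one --- the paper's statement that the terms partition into ``$n$ vanishing subsums having $p$ terms'' is evidently a typo for $n/p$, which your argument silently corrects.
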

	
	We call a vanishing sum of roots of unity \emph{minimal} if the terms of the sum cannot be partitioned into proper subsums which are themselves vanishing. The above lemma essentially says that the only minimal vanishing sums consisting of $p$-power roots of unity are the ``rotations'' of $\sum_{i=0}^{p-1}\zeta_{p}^i$ by some root of unity $\epsilon$. A classical result of de Bruijn in \cite{Bru} provides us with an analogous result for vanishing sums when two primes are involved. Alternatively, see \cite[Theorem 3.3]{LamLe} for a modern ring-theoretic proof.
	
	\begin{lemma}\label{L:9}
		Assume $\sum_{i=1}^n \epsilon_i$ is a minimal vanishing sum of $p^aq^b$-th roots of unity $\epsilon_i$ for distinct primes $p$ and $q$ and nonnegative integers $a$ and $b$. Then, $n$ is equal to either $p$ or $q$, and the sum is of the form in Lemma \ref{L:8}.
	\end{lemma}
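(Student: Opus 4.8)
The plan is to reduce to the case $N := p^a q^b = pq$ by lowering the exponents $a$ and $b$ one prime at a time, and then to finish by a short combinatorial argument inside $\Q_{pq}$. Throughout, write each term as $\epsilon_i = \omega_i\,\zeta_{p^a}^{s_i}$, where $\omega_i$ is a $q^b$-th root of unity and $0 \le s_i < p^a$. The engine of the reduction is the tower $\Q \subset \Q_{pq^b} \subset \Q_N$: since $\Q_{p^a}$ and $\Q_{q^b}$ are linearly disjoint over $\Q$, the element $\zeta_{p^a}$ has minimal polynomial $x^{p^{a-1}} - \zeta_p$ over $\Q_{pq^b}$, so $\{1, \zeta_{p^a}, \ldots, \zeta_{p^a}^{p^{a-1}-1}\}$ is a $\Q_{pq^b}$-basis of $\Q_N$.

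To lower $a$, first write $s_i = p^{a-1} u_i + r_i$ with $0 \le r_i < p^{a-1}$ and $0 \le u_i < p$, so that $\zeta_{p^a}^{s_i} = \zeta_{p^a}^{r_i}\zeta_p^{u_i}$ because $\zeta_{p^a}^{p^{a-1}} = \zeta_p$. Collecting terms by the value of $r_i$ gives
\[
0 = \sum_{i=1}^n \epsilon_i = \sum_{r=0}^{p^{a-1}-1} \zeta_{p^a}^{r}\Bigl(\sum_{i\,:\,r_i = r} \omega_i\,\zeta_p^{u_i}\Bigr),
\]
and each inner sum lies in $\Q_{pq^b}$. By the linear independence just noted, every inner sum must vanish. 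Thus the terms of the original sum split, according to the value of $r_i$, into vanishing subsums; minimality forces all $r_i$ to share a common value $r_0$, and after factoring out $\zeta_{p^a}^{r_0}$ we are left with a minimal vanishing sum $\sum_i \omega_i \zeta_p^{u_i} = 0$ of $pq^b$-th roots of unity. Running the symmetric argument with the roles of $p$ and $q$ interchanged lowers $b$ as well, so we may assume $N = pq$.

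With $N = pq$, write $\epsilon_i = \omega_i \zeta_p^{u_i}$ with $\omega_i$ a $q$-th root of unity and $0 \le u_i < p$, and set $c_u = \sum_{i : u_i = u}\omega_i \in \Q_q$. Since $\{1, \zeta_p, \ldots, \zeta_p^{p-2}\}$ is a $\Q_q$-basis of $\Q_{pq}$, the only $\Q_q$-relation among $\zeta_p^0, \ldots, \zeta_p^{p-1}$ is $\sum_{u} \zeta_p^u = 0$, so the identity $\sum_u c_u \zeta_p^u = 0$ forces all $c_u$ to equal a common value $c$. If $c = 0$, then each group $\{i : u_i = u\}$ is itself a vanishing sum of $q$-th roots of unity, and minimality (together with Lemma \ref{L:8}) leaves exactly one nonempty group, giving a rotation of $\sum_{j=0}^{q-1}\zeta_q^j$ and $n = q$. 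If $c \ne 0$, then every group is nonempty; representing each group as a multiset of $q$-th roots of unity, any group containing a full set $\{\zeta_q^j\}_{j}$ would produce a vanishing subsum, so by minimality every group equals the unique reduced multiset $A^*$ representing $c$, whence $\sum_{\omega \in A^*}\bigl(\omega \sum_u \zeta_p^u\bigr) = 0$ exhibits $|A^*|$ vanishing subsums; minimality then forces $|A^*| = 1$, so $c$ is a single root of unity and the sum is a rotation of $\sum_{u=0}^{p-1}\zeta_p^u$ with $n = p$.

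I expect the main obstacle to be this final $N = pq$ step, where minimality must be invoked twice in the case $c \neq 0$: once to collapse all $p$ groups to a common multiset $A^*$, and once more to collapse $A^*$ to a single root of unity by regrouping along the $p$-th roots. (Alternatively, one may simply cite de Bruijn's theorem \cite{Bru} or \cite[Theorem 3.3]{LamLe} directly, but the tower reduction above has the advantage of making the dependence on Lemma \ref{L:8} completely transparent.)
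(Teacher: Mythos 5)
Your argument is correct, but be aware that the paper does not actually prove Lemma \ref{L:9}: it is stated without proof, with pointers to de Bruijn's original factorization-theoretic argument \cite{Bru} and to the ring-theoretic proof in \cite[Theorem 3.3]{LamLe}. You have therefore supplied a proof the paper outsources, and it holds up. The reduction step is sound: $[\Q_{p^aq^b} : \Q_{pq^b}] = p^{a-1}$ and $\zeta_{p^a}$ is a root of $x^{p^{a-1}} - \zeta_p$, so that polynomial is the minimal polynomial and your basis claim follows; the inner sums then vanish coordinatewise, minimality collapses the $r$-blocks to one, and rotating by $\zeta_{p^a}^{-r_0}$ preserves both minimality and the target form, so the induction on $a$ and $b$ is legitimate (if the reduction terminates with $a = 0$ or $b = 0$, Lemma \ref{L:8} finishes directly, which you leave implicit but is harmless). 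In the $N = pq$ endgame, the one genuinely compressed step is the phrase ``the unique reduced multiset $A^*$ representing $c$'': uniqueness is true but deserves its one-line justification, namely that the kernel of the map $\Z^q \to \Z[\zeta_q]$ sending the $j$-th standard vector to $\zeta_q^j$ is generated by the all-ones vector (since $1, \zeta_q, \ldots, \zeta_q^{q-2}$ is a $\Z$-basis of $\Z[\zeta_q]$), so two multiplicity vectors with equal sums differ by an integer multiple $t$ of $(1,\ldots,1)$, and since each reduced multiset omits some $q$-th root entirely, $t = 0$. With that inserted, your double use of minimality in the $c \neq 0$ case (first to identify all $p$ groups with $A^*$, then to regroup along $\omega\sum_u \zeta_p^u = 0$ and force $|A^*| = 1$) is exactly right, as is the $c = 0$ case via Lemma \ref{L:8}. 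Compared with the paper's citation, your route buys self-containedness and, as you note, makes the dependence on Lemma \ref{L:8} transparent; it is in fact close in spirit to the Lam--Leung induction on the number of primes dividing $N$, carried out with field towers rather than group rings, whereas de Bruijn's own proof proceeds through factorizations of cyclic groups. The cost is length, which is presumably why the paper simply cites the literature.
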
 
	In particular, this means that the terms of any vanishing sum of $p^aq^b$-th roots of unity may be partitioned into minimal vanishing subsums, each of which take either the form
	\begin{equation*}
		\sum_{i=0}^{p-1} \epsilon\zeta_p^i \;\;\; \mbox{or} \;\;\; \sum_{i=0}^{q-1} \epsilon\zeta_q^i
	\end{equation*}
	for some root of unity $\epsilon$.
	
	The above lemma has the following application to characters.
	
	\begin{proposition}\label{P:10}
		Let $\chi$ be any generalized character of a group $G$, and let $P$ be a nontrivial Sylow $p$-subgroup of $G$ for some prime $p$.
		\begin{itemize}
			\item[(i)] If $\chi(x)$ is a root of unity for some $p$-element $x$, then $\chi(1) \equiv \pm 1 \Mod{p}$.
			
			\item[(ii)] If $\chi$ is a root of unity for all nonidentity $p$-elements, then when $p$ is odd or $|P| = 2$, $\chi(1) \equiv \pm 1 \Mod{|P|}$. Otherwise, $p = 2$, $|P| \geq 4$, and $\chi(1) \equiv \pm 1 \Mod{|P|/2}$.
		\end{itemize}
	\end{proposition}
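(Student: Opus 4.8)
For part (i) the plan is to exploit the single prime $p$ through reduction modulo $p$. Writing $x$ as a $p$-element of order $p^a$, the value $\chi(x)$ lies in $\Z[\zeta_{p^a}]$, since the eigenvalues of $x$ in any representation affording a constituent of $\chi$ are $p^a$-th roots of unity. I would then use the ring homomorphism $\varphi \colon \Z[\zeta_{p^a}] \to \F_p$ determined by $\zeta_{p^a} \mapsto 1$, which is well defined because $\Phi_{p^a}(1) = p \equiv 0 \Mod p$. Since every $p^a$-th root of unity is sent to $1$, collecting eigenvalues gives $\varphi(\chi(x)) = \chi(1) \bmod p$; equivalently, $\chi(x) - \chi(1)$ is divisible by $1 - \zeta_{p^a}$. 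On the other hand, $\chi(x)$ is a root of unity lying in $\Q_{p^a}$, so it is forced to be one of $\pm\zeta_{p^a}^k$ (when $p$ is odd) or a $2^a$-th root of unity (when $p = 2$); in either case $\varphi(\chi(x)) = \pm 1$. Combining the two computations yields $\chi(1) \equiv \pm 1 \Mod p$. (One could instead read this off Lemma \ref{L:8} applied to the restriction of $\chi$ to $\la x \ra$, but the homomorphism $\varphi$ is cleaner for arbitrary $a$.)

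For part (ii), where root-of-unity values are now available on all of $P^{\#}$, I would argue globally using the restriction $\psi = \chi|_P$, which is again a generalized character. Because $|\psi(y)| = 1$ for every $y \in P^{\#}$, a direct computation of the inner product over $P$ gives $[\psi,\psi]_P = \bigl(\chi(1)^2 + |P| - 1\bigr)/|P|$, and since this inner product is a nonnegative integer (it equals $\sum_{\lambda}\la \psi,\lambda\ra^2$), I obtain the key congruence $\chi(1)^2 \equiv 1 \Mod{|P|}$. The proposition then reduces to counting the square roots of $1$ modulo $|P| = p^a$. When $p$ is odd the only solutions are $\pm 1$, and likewise when $|P| = 2$, so in these cases $\chi(1) \equiv \pm 1 \Mod{|P|}$.

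The main obstacle is the remaining case $p = 2$ with $|P| \geq 4$, which is exactly the source of the exceptional clause in the statement. Here $t^2 \equiv 1 \Mod{2^a}$ acquires four solutions once $a \geq 3$, namely $\pm 1$ and $2^{a-1} \pm 1$, reflecting that the $2$-torsion of $(\Z/2^a\Z)^\times$ has order $4$. Each of these is congruent to $\pm 1$ modulo $2^{a-1}$, so I can only conclude $\chi(1) \equiv \pm 1 \Mod{|P|/2}$, and this cannot be sharpened to modulus $|P|$ in general. For $a = 2$ the only solutions are $\pm 1$, so the weaker statement modulo $|P|/2$ holds a fortiori. I would close by noting that the two parts genuinely require different tools: the pointwise reduction of part (i) uses a single element and yields only information modulo $p$, whereas the sharper modulus in part (ii) depends on the averaging over all of $P$ that the stronger hypothesis makes available.
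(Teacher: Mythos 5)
Your proof is correct, and part (i) takes a genuinely different route from the paper's. For (i), the paper first adds integer multiples of $\rho_G$ so that $\chi$ becomes a genuine character, making $\chi(x)$ literally a sum of $\chi(1)$ roots of unity of $p$-power order, and then analyzes the vanishing sum $\chi(x) - \epsilon = 0$ using the classification of minimal vanishing sums (Lemma \ref{L:8}, plus de Bruijn's Lemma \ref{L:9} when $\epsilon$ has even order): the sign of $\epsilon$ decides whether $p \mid \chi(1)+1$ or $p \mid \chi(1)-1$. Your reduction homomorphism $\varphi \colon \Z[\zeta_{p^a}] \to \F_p$, $\zeta_{p^a} \mapsto 1$, reaches the same dichotomy in one step --- $\varphi(\chi(x)) = \chi(1) \bmod p$ on one side, and $\varphi(\chi(x)) = \pm 1$ on the other, the latter resting on the fact (which you correctly invoke) that the only roots of unity in $\Q_{p^a}$ are $\pm\zeta_{p^a}^k$ for odd $p$ --- and it applies directly to generalized characters, with no preliminary reduction to a character and no case analysis on how the vanishing sum partitions. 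What the paper's method buys is uniformity with the rest of the article, where Lemmas \ref{L:8}, \ref{L:9}, and \ref{L:11} carry all the weight in the harder setting of values that are sums of \emph{two} roots of unity, where a single residue modulo $p$ no longer suffices; for the weight-one statement here, your $\varphi$ is arguably the cleaner tool. For part (ii), you and the paper establish the same key congruence $\chi(1)^2 \equiv 1 \Mod{|P|}$ by essentially the same orthogonality computation: the paper observes that $(\chi\ol{\chi} - 1)\big|_P$ vanishes on $P^{\#}$ and is therefore an integer multiple of $\rho_P$, while you compute $[\chi_P, \chi_P] = \bigl(\chi(1)^2 + |P| - 1\bigr)/|P| \in \Z$ directly; these are two phrasings of one argument. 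Your explicit enumeration of the square roots of $1$ modulo $2^a$ (four of them once $a \geq 3$, all congruent to $\pm 1$ modulo $2^{a-1}$) supplies the arithmetic the paper leaves implicit in ``From this fact, the above statement now follows,'' and your observation that $a = 2$ gives $\chi(1) \equiv \pm 1 \Mod{4}$, stronger than the stated modulus $|P|/2$, is accurate.
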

	
	\begin{proof}
		By adding integer multiples of the regular character to $\chi$, we may assume that $\chi$ is in fact a character of $G$ without altering our assumptions. Thus, we will assume $\chi$ is a character below.
		
		For (i), set $\chi(x) = \epsilon$ for some root of unity $\epsilon$ and some $p$-element $x$ so that $\chi(1) - \epsilon = 0$ is a vanishing sum of $2|P|$-th roots of unity having $\chi(1) + 1$ terms. Now, $\chi(x)$ is a sum of $\chi(1)$ roots of unity having $p$-power order. If $\epsilon = -\zeta_{p^a}^i$ for some integers $a$ and $i$, then $\chi(x) - \epsilon$ consists solely of terms having $p$-power order since $\chi$ is assumed to be a character. Thus, $p \mid \chi(1) + 1$ by Lemma \ref{L:8}. Thus, $\chi(1) \equiv -1 \Mod{p}$ in this case. 
		
		On the other hand, if $\epsilon = \zeta_{p^a}^i$ for integers $a$ and $i$, then $\chi(x) - \epsilon$ is a vanishing sum whose terms involve the primes 2 and $p$. (Notice that the minus sign on $\epsilon$ forces $\epsilon$ to have even order.) If $p = 2$, then Lemma \ref{L:8} applies as in the above paragraph. Otherwise, $p$ is odd, and Lemma \ref{L:9} applies. If $\chi(x) - \epsilon$ is partitioned into minimal vanishing subsums, the $-\epsilon$ term belongs to a subsum of two terms, taking the form $\epsilon - \epsilon$. All remaining terms are those coming from the $\chi(1)$ terms of $\chi(x)$, which each have $p$-power order since $\chi$ is a character. These remaining terms form a vanishing sum, the number of whose terms are divisible by $p$ by Lemma \ref{L:8}. It now follows that $p \mid \chi(1) - 1$, and so, $\chi(1) \equiv 1 \Mod{p}$. This proves (i).
		
		For (ii), notice that $\chi\ol{\chi} - 1$ is a character taking the value 0 on all nonidentity $p$-elements. Thus, the restriction of $\chi\ol{\chi} - 1$ to $P$ is an integer multiple of the regular $\rho_P$. By evaluating $\chi$ at the identity, we have now that $\chi(1)^2 \equiv 1 \Mod{|P|}$. From this fact, the above statement now follows.
	\end{proof}

	Our proof of the main theorem will involve a case analysis where we consider vanishing sums having up to six terms. We state a result (without proof) which will help us in that regard. First, we introduce a piece of terminology. If a sum of roots of unity has $k$ terms, we say that the sum has \emph{weight} $k$.
	\begin{lemma}\label{L:11}
		\begin{itemize}
			\item[(i)] There are no minimal vanishing sums of weight 4.
			\item[(ii)] The sum $\zeta_6 + \zeta_6^2 + \zeta_5 + \zeta_5^2 + \zeta_5^3 + \zeta_5^4$ is the unique minimal vanishing sum of weight 6 up to a rotation by some root of unity. 
		\end{itemize}
	\end{lemma}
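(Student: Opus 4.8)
The plan is to reduce both parts to the one- and two-prime results already recorded (Lemmas \ref{L:8} and \ref{L:9}) once the primes that can occur have been bounded. The key external input is the classical fact, due to Mann (cf.\ \cite{LamLe}), that in a minimal vanishing sum of weight $n$ one may, after multiplying through by a suitable root of unity, assume every summand is an $N$-th root of unity with $N$ squarefree and every prime divisor of $N$ at most $n$; equivalently, a prime $p$ can appear in a minimal relation only when the weight is at least $p$. Part (i) is then immediate: a minimal vanishing sum of weight $4$ can involve only the primes $2$ and $3$, so after a rotation all its terms are $6$th roots of unity, and Lemma \ref{L:9} (with $p=2$, $q=3$) says that any minimal vanishing sum of $2^a3^b$-th roots of unity has weight $2$ or $3$. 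Since $4\notin\{2,3\}$, no such sum exists.

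For part (ii) the same bound confines a minimal vanishing sum of weight $6$ to the primes $2,3,5$, so after a rotation every term is a $30$th root of unity. If only one prime occurred the weight would be a multiple of that prime with minimal value the prime itself (Lemma \ref{L:8}), and if exactly two occurred Lemma \ref{L:9} would force the weight into $\{2,3,5\}$; as $6$ lies in neither list, all three primes genuinely appear. I would then fiber the sum over the prime $5$: writing each term uniquely as $\epsilon_i=\eta_i\xi_i$ with $\eta_i$ a $6$th root of unity and $\xi_i$ a $5$th root of unity, and setting $A_j:=\sum_{\xi_i=\zeta_5^{\,j}}\eta_i\in\Z[\zeta_6]$, the relation becomes $\sum_{j=0}^{4}A_j\zeta_5^{\,j}=0$. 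Since $1,\zeta_5,\zeta_5^2,\zeta_5^3$ are linearly independent over $\Q_6$ (the minimal polynomial of $\zeta_5$ over $\Q_6$ is $\Phi_5$), this forces $A_0=A_1=\cdots=A_4=:A$. Minimality excludes $A=0$, for otherwise each nonempty $5$-fiber would be a proper vanishing subsum; hence $A\neq 0$, every fiber is nonempty, and as six terms split among five fibers the fiber sizes must be $(2,1,1,1,1)$ in some order.

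Each of the four singleton fibers contributes a single $6$th root of unity equal to $A$, so $A=\eta$ is itself a $6$th root of unity, and rotating by $\eta^{-1}$ I may take $A=1$. The doubled fiber then consists of two $6$th roots of unity summing to $1$, i.e. $\eta'+\eta''=1$, which is the weight-$3$ vanishing relation $\eta'+\eta''+(-1)=0$; by Lemma \ref{L:9} this is a rotation of $1+\zeta_3+\zeta_3^2$, and solving yields the unique pair $\{\zeta_6,\zeta_6^5\}$. A final rotation by a power of $\zeta_5$ moving the doubled fiber to $j=0$ turns the sum into $\zeta_6+\zeta_6^5+\zeta_5+\zeta_5^2+\zeta_5^3+\zeta_5^4$ (the overlap of a triangle and a pentagon, since $-\zeta_3-\zeta_3^2=\zeta_6+\zeta_6^5=1$), giving both existence and uniqueness up to rotation. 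I would finish by checking this sum is genuinely minimal: a proper vanishing subsum would be a minimal relation of weight $2$, $3$, or $5$ drawn from these six roots, and a short inspection rules each out (no two are negatives, no three form a rotated triangle, no five form a rotated pentagon). The main obstacle is exactly this three-prime analysis of part (ii): the one- and two-prime lemmas do not apply directly, and it is the fibering-over-$5$ argument together with the classification of two $6$th roots summing to a single one that does the real work, whereas the prime bound, though essential, is a clean citation.
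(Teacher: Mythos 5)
Your proposal cannot be checked against a proof in the paper, because the paper offers none: Lemma \ref{L:11} is explicitly stated ``without proof,'' with the classification of minimal vanishing sums of small weight delegated to the literature (\cite{PR}, \cite{LamLe}). Your argument makes the lemma self-contained modulo a single citation, namely Mann's theorem bounding the primes occurring in a minimal relation by its weight. Granting that, your deduction of (i) from Lemma \ref{L:9} is immediate and correct, and your treatment of (ii) --- fibering a relation in $30$th roots of unity over the $\zeta_5$-component, using that $1,\zeta_5,\zeta_5^2,\zeta_5^3$ are linearly independent over $\Q_6$ to force all fiber sums equal to a common $A$, pinning the fiber sizes to $(2,1,1,1,1)$, and classifying the doubled fiber via the rotated weight-$3$ relation --- is a clean uniqueness proof; your closing verification that the resulting sum is genuinely minimal (no sub-relation of weight $2$, $3$, or $5$, weight $4$ being excluded by part (i)) is exactly the step such sketches usually omit. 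What your route buys over the paper's citation is a self-contained argument at the precise level of generality needed, using only the two lemmas the paper has already recorded plus one standard input.

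Two remarks. First, a small patch: to exclude $A=0$ you argue that ``each nonempty $5$-fiber would be a proper vanishing subsum,'' which requires at least two nonempty fibers; the degenerate case of a single nonempty fiber is not actually ruled out by your ``all three primes appear'' step, since a common factor $\zeta_5^{j}$ makes $5$ divide the order of every term without $5$ entering the relation in an essential way. That case dies instantly, though: rotating by $\zeta_5^{-j}$ yields a minimal vanishing sum of weight $6$ in $6$th roots of unity, contradicting Lemma \ref{L:9} exactly as in your part (i), so the gap is a one-line repair. Second, your computation in fact exposes a typo in the statement itself: with the paper's convention $\zeta_n=\exp(2i\pi/n)$, the displayed sum $\zeta_6+\zeta_6^2+\zeta_5+\zeta_5^2+\zeta_5^3+\zeta_5^4$ equals $i\sqrt{3}-1\neq 0$, so it is not a vanishing sum at all. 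The correct relation, which your fibering argument produces, is $\zeta_6+\zeta_6^5+\zeta_5+\zeta_5^2+\zeta_5^3+\zeta_5^4=0$, i.e., the pentagon relation with the term $1$ replaced by $-\zeta_3-\zeta_3^2$; the exponent $2$ in the statement should read $5$ (equivalently $-1$).
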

	
	This lemma combined with Lemma \ref{L:8} describes all minimal vanishing sums of weight at most 6. Minimal vanishing sums have been studied extensively by many different authors for several different purposes, and those of weight up to 12 have essentially been classified (cf. \cite{PR}). An especially good source on this topic may be found in \cite{LamLe}.
	
	At last, we prove a lemma on characters of abelian groups.
	
	\begin{lemma}\label{L:12}
		Let $G$ be an abelian group of odd order, and assume that $\lambda_1, \lambda_2, \lambda_3 \in \Irr(G)$ are mutually distinct. There then exists $g \in G$ such that $\lambda_i(g) \neq \lambda_j(g)$ for $i \neq j$.
	\end{lemma}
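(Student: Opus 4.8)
The plan is to argue by contradiction. Suppose that for every $g \in G$ there exist indices $i \neq j$ with $\lambda_i(g) = \lambda_j(g)$. Since $G$ is abelian, each $\lambda_i$ is a linear character, so $\lambda_i(g) = \lambda_j(g)$ is equivalent to $(\lambda_i\ol{\lambda_j})(g) = 1$, i.e.\ to $g \in \Ker(\lambda_i\ol{\lambda_j})$. Setting $\mu_{ij} = \lambda_i\ol{\lambda_j}$ for the three unordered pairs $\{i,j\} \subset \{1,2,3\}$, the assumption says precisely that $G = \Ker(\mu_{12}) \cup \Ker(\mu_{13}) \cup \Ker(\mu_{23})$. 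Because the $\lambda_i$ are mutually distinct, each $\mu_{ij}$ is a nontrivial linear character, so each $\Ker(\mu_{ij})$ is a \emph{proper} subgroup of $G$ of index equal to the order of $\mu_{ij}$.

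The heart of the matter is thus a covering argument: a group cannot be the union of three proper subgroups unless a specific small configuration occurs. The classical fact here is that if a group is the union of three proper subgroups, then all three have index $2$ and their intersection has index $4$ (the union-of-three-subgroups lemma, with the Klein four-group as the quotient). First I would invoke this to conclude that each $\Ker(\mu_{ij})$ has index $2$ in $G$, which forces each $\mu_{ij}$ to have order exactly $2$. This is where the hypothesis that $|G|$ is odd delivers the contradiction: an abelian group of odd order has no nontrivial element of order $2$ in its character group (equivalently, $x \mapsto x^2$ is an automorphism of $G$, so $\Irr(G)$ has no involutions), so a nontrivial linear character of order $2$ cannot exist. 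Hence the covering is impossible and the desired $g$ must exist.

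I expect the main obstacle to be handling the degenerate possibility that one of the three subgroups is contained in the union of the other two, which would let the covering collapse to a union of two subgroups; but a group is never the union of two proper subgroups, so this collapse only strengthens the contradiction rather than weakening it. The only genuine case to rule out is the irredundant union of exactly three proper subgroups, which is exactly what the index-$2$ structure result governs. One should also confirm that the three characters $\mu_{12}, \mu_{13}, \mu_{23}$ are genuinely distinct and nontrivial so that no pair of kernels coincides trivially; this follows immediately from the distinctness of $\lambda_1, \lambda_2, \lambda_3$, since $\mu_{ij} = \mu_{k\ell}$ together with $\mu_{ij}$ nontrivial would force a coincidence among the $\lambda$'s. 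Once these bookkeeping points are settled, the odd-order hypothesis closes the argument cleanly.
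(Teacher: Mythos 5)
Your proof is correct, but it follows a genuinely different route from the paper's. You translate the failure of the conclusion into a covering of $G$ by the three kernels $\ker(\lambda_i\ol{\lambda_j})$ and then invoke the classical union-of-three-subgroups theorem (Scorza): an irredundant cover by three proper subgroups forces all three to have index $2$, which is impossible in odd order --- indeed, even the weaker counting observation that at least one of the three covering subgroups must have index $2$ already suffices, so you do not need the full Klein-four structure. The paper instead normalizes $\lambda_3 = 1$ (multiplying through by $\ol{\lambda}_3$), assumes $\lambda_1(g) = \lambda_2(g)$ off $\ker\lambda_1 \cup \ker\lambda_2$, and expands $0 = |G|[\lambda_1,\lambda_2]_G$ over the partition $\{1\} \sqcup (\ker\lambda_1)^{\#} \sqcup (\ker\lambda_2)^{\#} \sqcup \big(G - (\ker\lambda_1 \cup \ker\lambda_2)\big)$ to get $|G| = |\ker\lambda_1| + |\ker\lambda_2|$, forcing both kernels to have order $2$ and contradicting oddness. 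Your argument is more elementary (no orthogonality relations) and symmetric in the three characters, and it makes transparent that the only obstruction is the existence of index-$2$ subgroups, so the lemma really holds for any $G$ with no subgroup of index $2$; the paper's computation buys a sharper numerical conclusion ($|G| = |H_1| + |H_2|$) but proves nothing more general.

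One bookkeeping claim in your writeup is wrong, though harmlessly so: the three characters $\mu_{12}, \mu_{13}, \mu_{23}$ need \emph{not} be distinct when the $\lambda_i$ are. For instance, $\lambda_1 = 1$, $\lambda_2 = \lambda$, $\lambda_3 = \lambda^2$ gives $\mu_{12} = \mu_{23} = \ol{\lambda}$; more generally $\mu_{12} = \mu_{23}$ holds exactly when $\lambda_2^2 = \lambda_1\lambda_3$, and distinct characters can also share a kernel (e.g.\ $\mu$ and $\ol{\mu}$). This does not damage the proof: if two of the kernels coincide, the cover collapses to a union of at most two proper subgroups, which is impossible for any group --- precisely the degenerate case you already dispose of correctly. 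You should simply delete the claim that $\mu_{ij} = \mu_{k\ell}$ forces a coincidence among the $\lambda$'s and route all degenerate configurations through the two-subgroup impossibility.
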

	
	\begin{proof}
		By multiplying each character by $\ol{\lambda}_3$, we can assume without loss that $\lambda_3$ is trivial. It now suffices to produce an element $g \in G - (\ker\lambda_1 \cup \ker\lambda_2)$ (which is guaranteed to be nonempty) for which $\lambda_1(g) \neq \lambda_2(g)$. If $\ker\lambda_1 \subseteq \ker\lambda_2$ or vice versa, the statement follows by the distinctness of the two characters, so assume otherwise. In particular, we may assume that neither of the two characters are faithful.
		
		Set $H_i = \ker\lambda_i$ for $i=1,2$. Clearly, we may assume that $H_1 \cap H_2 = 1$ by taking quotients. Assume the theorem is false so that $\lambda_1(g) = \lambda_2(g)$ for all $g \in G - (H_1 \cup H_2)$. Notice that since $H_1$ is not a subgroup of $H_2$ and vice versa, $\lambda_i$ restricts nontrivially to $H_j$ for $i \neq j$. Thus, $[\lambda_1,\lambda_2]_{H_i} = 0$ for both $i$. We now partition $G$ as 
		$$G = 1 \cup H_1^{\#} \sqcup H_2^{\#} \sqcup G - (H_1 \cup H_2).$$ By the orthogonality relations, we obtain
		$$
		0 = |G|[\lambda_1,\lambda_2]_G = 1 + |H_1|[\lambda_1,\lambda_2]_{H_1} - 1 + |H_2|[\lambda_1,\lambda_2]_{H_2} - 1 + |G| - |H_1 \cup H_2|
		$$ $$
		= 1 - 1 - 1 + |G| - |H_1| - |H_2| + 1 = |G| - |H_1| - |H_2|.
		$$
		We obtain that $|H_1| + |H_2| = |G| \geq |H_1||H_2|$. Since neither $|H_1|$ nor $|H_2|$ equals 1, this forces $|H_1| = |H_2| = 2$, contradicting the fact that $|G|$ is odd.
	\end{proof}

	\section{Proofs of Main Result}

		Assume that $\chi \in \Z[\Irr(G)]$ satisfies the conditions of the main theorem. We may subtract rational (or even real) multiples of the regular character from $\chi$ without affecting the values $\chi$ takes on $G^{\#}$. Thus, we may assume that $\chi(1) = 0$. Now, we may write
		$$
			\chi = \sum_{\lambda \in \Irr(G)} a_\lambda \lambda
		$$
		for $a_\lambda \in \Q$. Note that although the coefficients $a_\lambda$ might not be integers, any two of these coefficients must differ by an integer. For two $\eta, \theta \in \Irr(G)$, we claim that $|a_\eta - a_\theta| \leq 2$. By the triangle inequality, we obtain that
		\begin{equation}\label{E:Temp}
			[\chi,\chi] = \sum_{\lambda \in \Irr(G)} a_\lambda^2 \leq \dfrac{4(|G|-1)}{|G|} < 4.
		\end{equation}
		Assume there exists $\eta, \theta \in \Irr(G)$ such that $a_\eta - a_\theta \geq 3$. We have that
	$$
		a_\theta^2 + (a_\theta \pm 3)^2 \leq a_\theta^2 + a_\eta^2 \leq [\chi,\chi] < 4.
	$$
		However, by treating $a_\theta^2 + (a_\theta \pm 3)^2$ as a polynomial in the variable $a_\theta$, one sees that the minimum value taken by this expression is $9/2$, contradicting the above inequality. Thus, there exists $x \in \Q$ such that $[\chi,\lambda] = x + i$, for $i = 0,1,2$ and for each $\lambda \in \Irr(G)$.

		 Now, let $a_i$ denote the number of $\lambda \in \Irr(G)$ such that $[\chi,\lambda] = x + i$. We may rewrite \eqref{E:Temp} in the following form:
		
		$$
			[\chi,\chi] = a_0x^2 + a_1(x+1)^2 + a_2(x+2)^2 \leq \dfrac{4(|G|-1)}{|G|}.
		$$
		Using the fact that $a_0 + a_1 + a_2 = |G|$, we obtain
		\begin{equation}\label{main_ineq}
			a_0x^2 + a_1(x+1)^2 + a_2(x+2)^2 \leq \dfrac{4(a_0 + a_1 + a_2 - 1)}{a_0 + a_1 + a_3}.
		\end{equation}
		The above inequality will be fundamental. When viewing this as a quadratic inequality in $x$, the discriminant is given by
		$$
			4(a_1 + 2a_2)^2 - 4(a_0+a_1+a_2)\bigg(a_1 + 4a_2 - \dfrac{4(a_0 + a_1 + a_2 - 1)}{a_0 + a_1 + a_2}\bigg).
		$$
		A valid choice of $x$ will not exist if this discriminant is negative, so we assume this not to be the case. Algebraically rearranging, we obtain
		$$
			4 \geq \dfrac{a_0a_1 + a_1a_2 + 4a_0a_2}{a_0 + a_1 + a_2 - 1}.
		$$
		If either $a_0$ or $a_2$ equals one, then one verifies that this inequality forces the other to be at most 3. Notice that if $a_0 = 1$ and $a_2 = 3$ (or vice versa), equality actually holds. Likewise, if $a_0$ and $a_2$ are both at least 2, then the above inequality fails, so this case cannot hold either. On the other hand, if $a_2 = 0$, then the above inequality becomes
		\begin{equation}\label{dumb-case}
			4(a_0 + a_1 - 1) - a_0a_1 \geq 0.
		\end{equation}
		One verifies that $a_0 \geq 5$ implies $a_1 \leq 16$, and by symmetry, $a_1 \geq 5$ implies $a_0 \leq 16$. Since $|G| = a_0 + a_1 + a_2$, this situation forces $|G| \leq 21$. We obtain the same result by symmetry if we take $a_0 = 0$. The case where $a_0 \geq 5$ will be handled in Case 5 below.
		
		We now have that either $|G| \leq 21$ or $\chi$ is of the form in the four outlier cases. We continue by cases from here. We will make repeated reference to Lemma \ref{L:8}.\\
		
		\paragraph{Case 1: $a_0 = 3$ and $a_2 = 0$, or vice versa.} In this case, $\chi = a\rho \pm (\lambda_1 + \lambda_2 + \lambda_3)$ for distinct $\lambda_i \in \Irr(G)$. Since the sign of $a$ is irrelevant, it is of no loss to multiply $\chi$ by a sign, so we may assume $\chi = a\rho + \lambda_1 + \lambda_2 + \lambda_3.$ Now, choose $g \in G^\#$. We have that
		$$
		\chi(g) = \lambda_1(g) + \lambda_2(g) + \lambda_3(g) = \delta_1\epsilon_1 + \delta_2\epsilon_2 
		$$
		for roots of unity $\epsilon_i$ and $\delta_i \in \{-1,0,1\}$. This gives us a vanishing sum having at most 5 terms:
		$$
		\lambda_1(g) + \lambda_2(g) + \lambda_3(g) - \delta_1\epsilon_1 - \delta_2\epsilon_2 = 0.
		$$
		
		If both $\delta_1$ and $\delta_2$ are nonzero, then we have a vanishing sum of weight 5 which is either a minimal vanishing sum of 5 terms or else is able to be partitioned into two minimal vanishing sums with one having 2 terms and the other having 3. In the former case, $\lambda_i(g)$ is a root of unity having order divisible by 5 for at least two $i$, and so $5 \mid o(g)$ and $5 \in \pi(G)$. In the latter case, if neither $\lambda_i(g)$ has order divisible by $3$, then this implies that $\lambda_i(g) = -\lambda_j(g)$ for some $i \neq j$, but this implies that $g$ has even order and $2 \in \pi(G)$. On the other hand, if any $\lambda_i(g)$ has order divisible by 3, then $3 \mid o(g)$ and $3 \in \pi(G)$.
		
		Otherwise, if $\delta_1 = \delta_2 = 0$, then $\lambda_1(g) + \lambda_2(g) + \lambda_3(g) = 0$ is a minimal vanishing sum of weight three, implying that $3 \mid o(g)$ and that $3 \in \pi(G)$.
		
		Lastly, if at exactly one of $\delta_1 = \pm 1$ and $\delta_2 = 0$ (or vice versa), then we have $\lambda_1(g) + \lambda_2(g) + \lambda_3(g) \pm \epsilon_1 = 0$, a vanishing sum of weight 4. This can only be partitioned into two vanishing sums of weight 2 by Lemma \ref{L:11}, and in this case, at least two of the terms will have even order. If $o(g)$ is odd, then each $\lambda_i$ will have odd order, an impossibility. Thus, $o(g)$ is even, and $2 \in \pi(G)$.\\
		
		\paragraph{Case 2: $a_0 = 2$ and $a_2 = 1$, or vice versa.} Again, we may assume without loss that $\chi = a\rho + \lambda_1 + \lambda_2 - \lambda_3$ for distinct $\lambda_i \in \Irr(G)$. Again, choose $g \in G^\#$. We may write
		$$
		\lambda_1(g) + \lambda_2(g) - \lambda_3(g) - \delta_1\epsilon_1 - \delta_2\epsilon_2 = 0 
		$$
		for roots of unity $\epsilon_i$ and $\delta_i \in \{-1,0,1\}$. If $\delta_1$ and $\delta_2$ are both nonzero, then this again gives us a vanishing sum of weight 5. If the vanishing sum is minimal, then it follows as before that $5 \in \pi(G)$. Otherwise, the sum may be partitioned into two minimal vanishing sums of weight 2 and 3. Now, at least two of the terms must have order divisible by 3. If one of these terms is $\lambda_i(g)$, then it follows as before that $3 \in \pi(G)$. Otherwise, it follows that $- \delta_1\epsilon_1 - \delta_2\epsilon_2 = \epsilon(\zeta_3 + \zeta_3^2) = -\epsilon$ for some root of unity $\epsilon$, so in particular, $\chi(g)$ is expressible as a root of unity, a case which we handle below.
		
		If $\delta_1 = \delta_2 = 0$, then it follows that $\lambda_1(g) + \lambda_2(g) - \lambda_3(g) = 0$ is a minimal vanishing sum of weight 3, and it again follows that $3 \in \pi(G)$.
		
		Otherwise, if $\delta_1 = \pm 1$ and $\delta_2 = 0$ (or vice versa), then $\lambda_1(g) + \lambda_2(g) - \lambda_3(g) - \delta_1\epsilon_1 = 0$ is a vanishing sum of weight 4. Assume $G$ is odd. By Lemma \ref{L:12}, for any three distinct linear characters of $G$, we can always choose a group element for which the $\lambda_i$ take mutually distinct values, so in particular, since $g$ has odd order, $\lambda_1(g) + \lambda_2(g) - \lambda_3(g)$ has no subsum which vanishes, which then implies that $\lambda_1(g) + \lambda_2(g) - \lambda_3(g) - \delta_1\epsilon_1 = 0$ is a minimal vanishing sum of weight 4, an impossibility by Lemma \ref{L:11}.\\
		
		\paragraph{Case 3: $a_0 = 0$ and $a_2 = 4$, or vice versa.} Again, we may assume $\chi(g) = a\rho + \lambda_1 + \lambda_2+ \lambda_3 + \lambda_4$. We now have that
		$$
		\lambda_1(g) + \lambda_2(g) + \lambda_3(g) + \lambda_4(g) - \delta_1\epsilon_1 - \delta_2\epsilon_2 = 0.
		$$
		If the $\delta_i$'s are both nonzero, we have a vanishing sum of weight 6, which, if not minimal, may be partitioned into either two minimal vanishing sums of weight three or three minimal vanishing sums of weight 2. In the former case, at least four of the terms must have order divisible by 3, and it follows that at least one of the $\lambda_i(g)$'s has order divisible by three, so in this case, $3 \in \pi(G)$. In the latter case, at least three of the terms must have even order, yet if $g$ is chosen to be odd, then this also implies that each of the four $\lambda_i$'s have odd order, an impossibility. So in this case, $2 \in \pi(G)$. If the vanishing sum is in fact minimal, then by Lemma \ref{L:11}, the sum is of the form 
		$$
		\epsilon\zeta_6 + 	\epsilon\zeta_6^2 + 	\epsilon\zeta_5 + 	\epsilon\zeta_5^2 + 	\epsilon\zeta_5^3 + 	\epsilon\zeta_5^4
		$$
		for some root of unity $\epsilon$. One sees that regardless of the choice of $\epsilon$, at least 3 terms of the sum must have order divisible by 5. Thus, at least one of the $\lambda_i(g)$'s has order divisible by 5, so $5 \in \pi(G)$.
		
		If both $\delta_i$'s are zero, then we have that $\lambda_1(g) + \lambda_2(g) + \lambda_3(g) + \lambda_4(g) = 0$ is a vanishing sum of weight 4. If $g$ is chosen to have odd order, then each terms has odd order, impossible for a vanishing sum of weight 4 by Lemma \ref{L:11}. Thus, $2 \mid o(g)$ and $2 \in \pi(G)$ in this case.
		
		If $\delta_1 = \pm 1$ and $\delta_2 = 0$ (or vice versa), then we have a minimal vanishing sum of weight 5: $\lambda_1(g) + \lambda_2(g) + \lambda_3(g) + \lambda_4(g) - \delta_1\epsilon_1 = 0$. It follows then as above that at least one of the $\lambda_i(g)$'s has order divisible by 3 or 5, so either $3 \in \pi(G)$ or $5 \in \pi(G)$.\\
		
		\paragraph{Case 4: $a_0 = 1$ and $a_2 = 3$, or vice versa.} Again, we may assume $\chi(g) = a\rho + \lambda_1 + \lambda_2+ \lambda_3 - \lambda_4$. We now have that
		$$
		\lambda_1(g) + \lambda_2(g) + \lambda_3(g) - \lambda_4(g) - \delta_1\epsilon_1 - \delta_2\epsilon_2 = 0.
		$$
		
		Notice that in this case, we have equality in \eqref{main_ineq}, and it follows by the triangle inequality that $\delta_1\epsilon_1 = \delta_2\epsilon_2$. In particular, this vanishing sum cannot be minimal since the terms of a minimal vanishing sum of this weight are each distinct by Lemma \ref{L:11}. If the sum can be partitioned into two minimal vanishing sums of weight 3, then it follows as in Case 3 that at least one of the $\lambda_i(g)$'s has order divisible by $3$, and so, $3 \in \pi(G)$. On the other hand, we may have that the sum can be partitioned into three minimal vanishing sums of weight 2. If $G$ has odd order, then we can choose $g \in G$ such that the first three $\lambda_i$ take mutually distinct values on $g$ by Lemma \ref{L:12}. If $\lambda_4(g) = \lambda_i(g)$ for some $i \neq 4$ (say $i = 3$ for instance), then this implies $\lambda_1(g) + \lambda_2(g) - \pm 2\epsilon_1 = 0$, and thus, $\lambda_1(g) = \lambda_2(g)$, a contradiction. On the other hand, if $\lambda_4(g) \neq \lambda_i(g)$ for each $i$, then $\lambda_4(g)$ cancels with one of the $\delta_i\epsilon_i$ terms. This gives rise to a vanishing sum of weight 4 having three terms of odd order, which is impossible.\\
		
		\paragraph{Case 5: $a_0 \geq 5$ or $a_2 \geq 5$.} We have seen that $a_0 \geq 5$ implies $a_2 = 0$ and vice versa. Thus, we may assume without loss that $a_2 = 0$ and $a_1 \geq 5$. We have seen that this implies $|G| \leq 21$. The only groups of order that are at most 21 and are indivisible by either 2, 3, or 5 are the cyclic groups of order 7, 11, 13, 17, and 19. Further analysis of the inequality in \eqref{dumb-case} shows that $a_1 = 6$ implies $a_0 \leq 10$ and that $a_1 = 7$ implies $a_0 \leq 8$. Cases with higher values of $a_1$ are subsumed by the cases considered here by symmetry. In particular, it follows that in this case, $\chi = a\rho_G \pm \sum_{i=1}^k \lambda_i$ for distinct $\lambda_i \in \Irr(G)$ and $5 \leq k \leq 7$.
		
		If $a_1 = 5$, then considering $\chi(g)$ for any element $g \in G^{\#}$ gives rise to a vanishing sum of weight between 5 and 7. Since over half of the terms will have odd order, it follows by Lemma \ref{L:9} that the terms of $\chi(g)$ are 7-th roots of unity, and so $|G| = 7$. In this case however, $\chi$ is some multiple of the regular character minus two linear characters. Similarly, it follows that $|G| = 7$ when considering the cases where $a_1$ is 6 or 7, meaning that $\chi$ is not a meaningful counterexample. This completes the proof.
		
	We have actually proven a few other facts within this proof, which we  state now for future reference.
	\begin{corollary}\label{C:13}
		Assume that $G$ is an abelian group having a generalized character $\chi$ whose values on $G^{\#}$ are sums of at most two roots of unity, and assume that $\chi = a\rho_G \pm \sum_{i=1}^k \lambda_i$ for $a \in \Z$ and distinct $\lambda_i \in \Irr(G)$. If $k = 6$, then $|G| \leq 16$. If $k = 7$, then $|G| \leq 15$. 
		
		Also, assume that either of the following holds:
		\begin{itemize}
			\item[(i)] $\chi = a\rho_G \pm (\lambda_1 + \lambda_2 + \lambda_3 - \lambda_4)$ for $a \in \Z$ and distinct $\lambda_i \in \Irr(G)$,
			\item[(ii)] $|G| = 21$ and $\chi = a\rho_G \pm \sum_{i=1}^5 \lambda_i$ for $a \in \Z$ and distinct $\lambda_i \in \Irr(G)$,
			\item[(iii)] $|G| = 16$ and $\chi = a\rho_G \pm \sum_{i=1}^6 \lambda_i$ for $a \in \Z$ and distinct $\lambda_i \in \Irr(G)$, or
			\item[(ii)] $|G| = 15$ and $\chi = a\rho_G \pm \sum_{i=1}^7 \lambda_i$ for $a \in \Z$ and distinct $\lambda_i \in \Irr(G)$.
		\end{itemize}
		Then for each $x \in G^{\#}$, we have that $\chi(x) = 2\epsilon$ where $\epsilon$ is a root of unity.
	\end{corollary}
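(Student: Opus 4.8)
The plan is to read off every assertion from the discriminant analysis already performed in the proof of the Main Theorem; no new tools are required. For the size bounds, suppose $\chi = a\rho_G \pm \sum_{i=1}^k \lambda_i$ with the $\lambda_i$ distinct. Since $\rho_G = \sum_{\lambda \in \Irr(G)}\lambda$ for abelian $G$, the multiplicities $[\chi,\lambda]$ take only the two values $a$ and $a\pm 1$, differing by $1$; after normalizing so that $\chi(1)=0$ this is unchanged, so up to the symmetry interchanging $a_0$ and $a_1$ we have $a_2 = 0$, $a_1 = k$, and $a_0 = |G|-k$. Substituting these into \eqref{dumb-case} gives $4(|G|-1) - k(|G|-k) \geq 0$, that is, $|G|(4-k) \geq (2-k)(2+k)$. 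For $k=6$ this reads $-2|G| \geq -32$, forcing $|G| \leq 16$, and for $k=7$ it reads $-3|G| \geq -45$, forcing $|G| \leq 15$; this settles the first two claims.

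For the remaining assertion, the crucial point is that in each of the four listed configurations equality holds in the rearranged inequality $4 \geq (a_0a_1 + a_1a_2 + 4a_0a_2)/(a_0+a_1+a_2-1)$, equivalently the discriminant of the quadratic \eqref{main_ineq} vanishes. In case (i) we have $(a_0,a_2)=(1,3)$ with $a_1$ unconstrained, and the quotient equals $(4a_1+12)/(a_1+3)=4$; in cases (ii), (iii), and (iv) the triple $(a_0,a_1,a_2)$ equals $(16,5,0)$, $(10,6,0)$, and $(8,7,0)$ respectively, and the quotient is $80/20$, $60/15$, and $56/14$, each equal to $4$.

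I would then turn a vanishing discriminant into equality in \eqref{E:Temp}. Regarding $f(x) := a_0x^2 + a_1(x+1)^2 + a_2(x+2)^2$ as an upward parabola, the inequality $f(x) \leq 4(|G|-1)/|G|$ is solvable precisely when the discriminant is nonnegative, and when it is exactly zero the unique admissible $x$ is the vertex, at which equality is attained. Since our $\chi$ does satisfy \eqref{E:Temp}, its value of $x$ must be this vertex, so $[\chi,\chi] = 4(|G|-1)/|G|$. As \eqref{E:Temp} was itself derived from the bound $|\chi(x)| \leq 2$ on $G^{\#}$ together with $\chi(1)=0$, equality there forces $|\chi(x)| = 2$ for every $x \in G^{\#}$. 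Writing $\chi(x) = \delta_1\epsilon_1 + \delta_2\epsilon_2$ with $\delta_i \in \{-1,0,1\}$ and $\epsilon_i$ roots of unity, the modulus can reach $2$ only when $\delta_1\epsilon_1 = \delta_2\epsilon_2$ is itself a root of unity $\epsilon$, whence $\chi(x) = 2\epsilon$.

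The one step needing care is the logical direction of this last argument: a vanishing discriminant must be used to pin $[\chi,\chi]$ to its extreme value, not merely to bound it, and it is precisely this that upgrades the inequality \eqref{E:Temp} to the equality delivering $|\chi(x)| \equiv 2$ on $G^{\#}$. The rest is direct substitution into inequalities already established.
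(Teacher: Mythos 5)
Your proposal is correct and takes essentially the same route as the paper's own proof: the bounds for $k=6$ and $k=7$ come from substituting $(a_0,a_1,a_2)=(|G|-k,\,k,\,0)$ into \eqref{dumb-case}, exactly the ``further analysis'' carried out in Case 5 of the main theorem, and the four listed configurations force equality in \eqref{main_ineq}, so $|G|[\chi,\chi]=4(|G|-1)$ and the triangle inequality pins $|\chi(x)|=2$, hence $\chi(x)=2\epsilon$, on all of $G^{\#}$. Your explicit vanishing-discriminant argument for why equality must hold merely fills in a step the paper asserts without elaboration, and it is sound.
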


	We wonder more generally about generalized characters of abelian groups whose values on nonidentity elements are sums of at most $k$ roots of unity. However, the case analysis required here indicates that it may be difficult to obtain a meaningful result in this level of generality.
		
	\section{Prime Graphs}

	 We now turn to proving a result on prime graphs as an application of our main theorem. For a (not necessarily abelian) group $G$, $\Gamma(G)$ is taken to be the prime graph of $G$.

	\begin{lemma}\label{L:14}
		Let $G$ be a group, and assume that $\chi \in \Z[\Irr(G)]$ takes values on $G^{\#}$ which are sums of at most two roots of unity. Then for any $p \in \pi(G)$, we have that $\chi(1) \equiv a \Mod{p}$ where $a \in \{-2,-1,0,1,2\}$.
	\end{lemma}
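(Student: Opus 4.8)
The plan is to reduce the statement to a congruence in a cyclotomic ring of integers and then classify, up to Galois conjugacy over $\Q_p$, the ways in which a sum of two roots of unity can lie in $\Q_p$. First I would add a sufficiently large integer multiple of $\rho_G$ to $\chi$; since $\rho_G$ vanishes on $G^{\#}$, this alters neither the hypothesis on the values of $\chi$ on $G^{\#}$ nor the residue $\chi(1) \bmod p$, so I may assume $\chi$ is an honest character. The residues $\{-2,-1,0,1,2\}$ exhaust $\Z/p\Z$ when $p\in\{2,3\}$, so the statement is vacuous there and I may assume $p\ge 5$. Fix an element $x\in G$ of order $p$ and a prime ideal $\mathfrak p$ lying over $p$ in $\Z[\zeta_{|G|}]$. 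Because $\chi(x)$ is a sum of $\chi(1)$ many $p$-th roots of unity and each $p$-power root of unity is $\equiv 1 \pmod{\mathfrak p}$, the standard congruence $\chi(1)\equiv\chi(x)\pmod{\mathfrak p}$ holds; writing $\chi(x)=\epsilon_1+\epsilon_2$ and splitting each $\epsilon_j=\eta_j\xi_j$ into its $p'$-part $\eta_j$ and $p$-power part $\xi_j$ gives $\chi(1)\equiv\eta_1+\eta_2\pmod{\mathfrak p}$. Since $\chi(1)$ is a rational integer, it then suffices to show that $\eta_1+\eta_2$ is a rational integer lying in $\{-2,-1,0,1,2\}$, for then $\chi(1)-(\eta_1+\eta_2)\in\mathfrak p\cap\Z=p\Z$.

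The key structural observation is that $\chi(x)=\epsilon_1+\epsilon_2$ automatically lies in $\Q_p$, being a sum of $p$-th roots of unity. If $\epsilon_1+\epsilon_2=0$ then $\chi(1)\equiv 0$ and we are done, so assume the sum is nonzero. A nonzero sum of two roots of unity determines the unordered pair $\{\epsilon_1,\epsilon_2\}$ uniquely (two unit vectors with a prescribed nonzero sum are symmetric about the line spanned by that sum). Hence every $\sigma\in\Gal(\overline{\Q}/\Q_p)$, which fixes $\chi(x)$ and permutes roots of unity, must permute $\{\epsilon_1,\epsilon_2\}$, so the pair is stable under $\Gal(\overline{\Q}/\Q_p)$. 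Consequently either both $\epsilon_j\in\Q_p$, or $\epsilon_1,\epsilon_2$ form a single Galois orbit of size two over $\Q_p$. In the first case the only roots of unity in $\Q_p$ are $\pm\zeta_p^k$, so each $\eta_j=\pm1$ and $\eta_1+\eta_2\in\{-2,0,2\}$. (The degenerate situation in which $\chi(x)$ is a single root of unity is the same computation with one term, giving $\eta_1=\pm1$ and recovering Proposition \ref{P:10}(i).)

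The main obstacle, and the only case requiring genuine work, is the conjugate case. Here $\epsilon_1,\epsilon_2$ have a common order $n$ with $[\Q_p(\zeta_n):\Q_p]=2$, i.e. $\phi(\lcm(p,n))=2(p-1)$. Writing $n=p^a n'$ with $(n',p)=1$ and using $p\ge5$, a short computation with the totient function forces $a\le 1$ and $\phi(n')=2$, so the $p'$-part $n'$ of $n$ lies in $\{3,4,6\}$. Since $p\ge5$, none of $\zeta_3,\zeta_4,\zeta_6$ lies in $\Q_p$ (the unique quadratic subfield of $\Q_p$ is $\Q(\sqrt{\pm p})$, which matches none of $\Q(\sqrt{-3})$ or $\Q(\sqrt{-1})$), so $\eta_1,\eta_2$ are precisely the two primitive $n'$-th roots of unity, which are complex conjugates. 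Thus $\eta_1+\eta_2=2\cos(2\pi/n')\in\{-1,0,1\}$, again a rational integer in the required range. Combining the cases yields $\chi(1)\equiv a\pmod p$ with $a\in\{-2,-1,0,1,2\}$, as claimed. I expect the totient computation isolating $n'\in\{3,4,6\}$ and the verification that these roots avoid $\Q_p$ to be the delicate points; everything else is bookkeeping.
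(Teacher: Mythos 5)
Your proof is correct, but it takes a genuinely different route from the paper's. The paper disposes of this lemma in two lines by invoking the Main Theorem: for $p \geq 7$, choose an abelian subgroup $H$ of a Sylow $p$-subgroup with $p \mid |H|$ (e.g.\ cyclic of order $p$); since $\pi(H) = \{p\}$ avoids $2,3,5$, the Main Theorem gives $\chi_H = a\rho_H + \delta_1\lambda_1 + \delta_2\lambda_2$ with $\delta_i \in \{-1,0,1\}$, and evaluating at $1$ yields $\chi(1) \equiv \delta_1 + \delta_2 \Mod{p}$. You instead prove the lemma from scratch: the congruence $\chi(1) \equiv \chi(x) \pmod{\mathfrak{p}}$ for $x$ of order $p$, followed by a Galois-theoretic classification of the ways a sum of two roots of unity can lie in $\Q_p = \Q(\zeta_p)$ (your usage of $\Q_p$ matches the paper's cyclotomic convention, so $\Gal(\overline{\Q}/\Q_p)$ makes sense). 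Your key observation --- a nonzero sum of two unit vectors determines the unordered pair, so the pair is stable under $\Gal(\overline{\Q}/\Q_p)$ --- is sound, and the totient computation correctly isolates the conjugate case as $\epsilon_j = \zeta_{n'}^{\pm 1}\xi$ with $n' \in \{3,4,6\}$ and $\xi$ of order dividing $p$, giving residues $-1,0,1$, while the split case gives $0, \pm 2$; together these exhaust $\{-2,-1,0,1,2\}$. Two small points to tidy: the roots of unity $\epsilon_j$ need not lie in $\Q(\zeta_{|G|})$ a priori, so $\mathfrak{p}$ should be chosen in $\Z[\zeta_N]$ with $N$ divisible by $|G|$ and by the orders of the $\epsilon_j$ (harmless, since $\mathfrak{p} \cap \Z = p\Z$ in any such ring); and in the conjugate case you should note explicitly that the $p$-parts satisfy $\xi_1 = \xi_2$ (as $\xi_1 \in \Q_p$ is fixed by the conjugating automorphism), whence $\eta_2 = \overline{\eta_1}$ by uniqueness of the $p$/$p'$-decomposition of a root of unity. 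As for what each approach buys: the paper's argument is essentially free given the Main Theorem (which is needed later anyway, e.g.\ in Theorem \ref{T:15}), and its restriction method yields the finer structural form of $\chi_H$ that the subsequent prime-graph arguments actually use; your argument is self-contained, works uniformly for $p \geq 5$ without touching the Main Theorem's long case analysis or its $|G| \leq 21$ exceptions, needs only a single element of order $p$ rather than an abelian subgroup, and pinpoints which residues arise from which configurations of roots of unity.
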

	
	\begin{proof}
		Clearly the result holds if $p \leq 5$, so assume otherwise. Choose an abelian subgroup $H$ of some $P \in \Syl_p(G)$. By our main theorem, the restriction of $\chi$ to $H$ takes the form $\chi_H = \rho_H + \delta_1\lambda_1 + \delta_2\lambda_2$ for $\delta_i \in \{-1,0,1\}$ and $\lambda_i \in \Irr(H)$. The result now follows. 
	\end{proof}
	
	Under our assumptions on $G$, this result allows us to partition the primes of $\pi(G) - \{2,3\}$ into 5 sets: $\bigsqcup_{i=-2}^2 \pi_i$, where $p \in \pi_i$ if $\chi(1) \equiv i \Mod{p}$. We obtain the following as a consequence.
	
	\begin{theorem}\label{T:15}
		Let $G$ be a group, and assume $\chi \in \Z[\Irr(G)]$ takes values on $G^{\#}$ which are sums of at most two roots of unity. Also, let $\Gamma''(G)$ denote the induced subgraph of $\Gamma(G)$ on the set $\pi(G) - \{2,3\}$, and let $\Gamma_i(G)$ denote the induced subgraph of $\Gamma(G)$ on the vertex set $\pi_i$. Then, if $\pi_i$ is nonempty, $\Gamma_i(G)$ is a union of connected components of $\Gamma''(G)$. 
	\end{theorem}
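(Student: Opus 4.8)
The plan is to reduce the statement to a single local claim: for every edge $\{p,q\}$ of $\Gamma''(G)$ — i.e. for distinct primes $p,q \in \pi(G)-\{2,3\}$ with an element of order $pq$ — the indices $i,j$ defined by $p \in \pi_i$ and $q \in \pi_j$ must coincide. Granting this, every edge of $\Gamma''(G)$ joins two vertices lying in the same $\pi_i$; hence along any path consecutive vertices share the same index, so each connected component of $\Gamma''(G)$ is contained in a single $\pi_i$. Consequently each nonempty $\pi_i$ is exactly the union of those components it contains, and the induced subgraph $\Gamma_i(G)$ is a union of connected components of $\Gamma''(G)$, as asserted.

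To prove the local claim, fix such $p,q$ and an element $g$ of order $pq$, and set $C = \langle g \rangle \cong C_{pq}$. Restriction maps $\Z[\Irr(G)]$ into $\Z[\Irr(C)]$, and since $C^{\#} \subseteq G^{\#}$ the values of $\chi|_C$ on $C^{\#}$ are again sums of at most two roots of unity, so the Main Theorem (Theorem \ref{T:B}) applies to $\chi|_C$. Because $\rho_C(1) = |C| = pq$ and every linear character has degree $1$, each admissible form for $\chi|_C$ yields $\chi(1) = \chi|_C(1) \equiv s \pmod{pq}$ for an explicit integer $s$ equal to the net signed number of irreducible constituents off $\rho_C$: namely $s = \delta_1 + \delta_2 \in \{-2,-1,0,1,2\}$ in the clean form, and $s \in \{\pm 1,\pm 2,\pm 3,\pm 4\}$ in the four outlier forms (i)--(iv). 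In particular $a_p \equiv s \pmod p$ and $a_q \equiv s \pmod q$, where $a_p, a_q \in \{-2,\dots,2\}$ by Lemma \ref{L:14}.

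When $p,q \geq 7$ we have $2,3,5 \notin \pi(C)$, so the clean form holds and $s \in \{-2,\dots,2\}$; since $p,q > 4$, the two congruences force $a_p = s = a_q$, giving $i = j$. The remaining case, where (say) $p = 5$ and $q \geq 7$, is where the work lies: now $5 \in \pi(C)$ and the outlier forms are genuinely available. Here I use Lemma \ref{L:14} at the prime $q$ to discard the dangerous ones. First, $|C| = 5q \geq 35 > 21$ excludes the small-order exceptional forms with $5 \leq k \leq 7$ constituents, leaving only the clean form and (i)--(iv). Next, the values $s = \pm 3$ (form (i)) and $s = \pm 4$ (form (iii)) are impossible, since no element of $\{-2,\dots,2\}$ is congruent to $\pm 3$ or $\pm 4$ modulo any $q \geq 7$, contradicting the existence of $a_q$. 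The surviving possibilities — the clean form together with (ii) ($s = \pm 1$) and (iv) ($s = \pm 2$) — all have $s \in \{-2,\dots,2\}$, whence $a_5 = s = a_q$ exactly as before.

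The main obstacle is precisely this last case analysis around the prime $5$: unlike $2$ and $3$ it is not removed from the vertex set, the non-clean conclusions of the Main Theorem really can occur for $|C| = 5q$, and one must verify that each such form is either incompatible with Lemma \ref{L:14} at $q$ or else has a signed constituent count $s$ small enough to pin $a_5$ and $a_q$ down to the common value $s$. Everything else is routine bookkeeping with congruences modulo primes exceeding $4$.
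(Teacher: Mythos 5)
Your proof is correct, and its skeleton coincides with the paper's: both reduce the theorem to the local claim that every edge of $\Gamma''(G)$ joins two primes lying in the same $\pi_i$, both restrict $\chi$ to an abelian subgroup of order $pq$, and both read off congruences for $\chi(1)$ modulo $p$ and $q$ from the forms permitted by Theorem \ref{T:B} (your "clean form plus $s=\delta_1+\delta_2$" computation is exactly the paper's opening paragraph and its Case 2, and your exclusion of the $5 \leq k \leq 7$ forms via $|C| = 5q > 21$ also matches). Where you genuinely diverge is at the one nontrivial step: ruling out the outlier types with net signed constituent count $s = \pm 3, \pm 4$, i.e. $a\rho_C \pm (\lambda_1+\lambda_2+\lambda_3)$ and $a\rho_C \pm (\lambda_1+\lambda_2+\lambda_3+\lambda_4)$, when $p = 5$. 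The paper's Case 1 disposes of these by root-of-unity combinatorics: it restricts to the Sylow $q$-subgroup, notes the constituents must remain distinct and cannot be absorbed into $\rho_{H_q}$, and then invokes de Bruijn's result (Lemma \ref{L:9}) to show the resulting vanishing sums of $q$-th and $2q$-th roots of unity admit no partition into minimal vanishing subsums. You instead dispose of them by pure congruence arithmetic: such a form on $C$ forces $\chi(1) \equiv \pm 3$ or $\pm 4 \pmod{q}$, which is incompatible with Lemma \ref{L:14}'s $\chi(1) \equiv a_q \pmod{q}$, $a_q \in \{-2,\dots,2\}$, since all the relevant differences are nonzero and at most $6 < 7 \leq q$. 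Your route is shorter and fully checkable (the paper leaves its vanishing-sum verification at the level of "one verifies using Lemma \ref{L:9}"), while the paper's computation establishes the slightly stronger structural fact that those restriction types cannot occur on an abelian group of order $5q$ at all --- a statement in the same spirit as Lemma \ref{L:16}, which the paper reuses in the proof of Theorem \ref{T:17}; your argument only shows them incompatible with the global hypothesis via Lemma \ref{L:14}, which is all that Theorem \ref{T:15} requires.
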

	
	\begin{proof}
		First, assume that $p$ and $q$ are primes such that $5 \leq p < q$. Further assume that if $G$ has an an abelian subgroup $H$ of order $pq$, then the restriction $\chi_H$ is of the form $a\rho_H + \delta_1\lambda_1 + \delta_2\lambda_2$ for $\lambda_i \in \Irr(H)$, $a \in \Z$, and $\delta_i \in \{-1,0,1\}$. In particular, this covers the case where $7 \leq p < q$ by our main result. Let $H_p$ and $H_q$ be the Sylow $p$- and Sylow $q$-subgroups of $H$ respectively. Now, the restriction of the regular character to any subgroup is a multiple of the regular character of that subgroup, so the restriction of $\chi$ to $H_p$ takes the form $qa\rho_{H_p} + \delta_1(\lambda_1)_{H_p} + \delta_2(\lambda_2)_{H_p}$. Similarly, the restriction of $\chi$ to $H_q$ is $pa\rho_{H_q} + \delta_1(\lambda_1)_{H_q} + \delta_2(\lambda_2)_{H_q}$. By evaluating $\chi$ at 1, we obtain that $\delta_1 + \delta_2 \equiv \chi(1) \Mod{p}$ and $\Mod{q}$. Since $\delta_1 + \delta_2 \in \{-2,-1,0,1,2\}$, it follows that $p,q \in \pi_{\delta_1+\delta_2}$. Thus in this case, there is no path in $\Gamma''(G)$ connecting an element of $\pi_i$ to an element of $\pi_j$ for $i \neq j$.
		
		Now, assume there exists an abelian subgroup $H$ of order $pq$ such that $\chi_H$ is not of the form $a\rho_H + \delta_1\lambda_1 + \delta_2\lambda_2$ for $\lambda_i \in \Irr(H)$, $a \in \Z$, and $\delta_i \in \{-1,0,1\}$. In particular, our main result implies that $p = 5$. Further , since $|H| = 5q > 21$, $\chi_H$ is of the form of one of the four outlier cases. Set $H_5$ and $H_q$ as the Sylow $5$- and Sylow $q$-subgroups respectively. We proceed by cases.\\
		
		\paragraph{Case 1: $\chi_H = a\rho_H \pm \lambda_1 \pm \lambda_2 \pm \lambda_3$ or $\chi_H = a\rho_H \pm \lambda_1 \pm \lambda_2 \pm \lambda_3 \pm \lambda_4$} Assume first that $\chi_H = a\rho_H + \lambda_1 + \lambda_2 + \lambda_3$. The case where the $\lambda_i$'s are each being subtracted will follow by symmetry. By the main result, the $\lambda_i$ must remain distinct when restricting $\chi$ to $H_q$. In particular, since $q > 3$, none of the $\lambda_i$'s can be absorbed into the regular character of $H_q$. However, evaluating $\chi$ at $x \in H_q$ gives $\chi(x) = \lambda_1(x) + \lambda_2(x) + \lambda_3(x) = \delta_1\epsilon_1 + \delta_2\epsilon_2$ for $\delta_i \in \{0,1\}$ and $2q$-th roots of unity $\epsilon_1$. Taking into account the fact that the $\lambda_i(x)$ are $q$-th roots of unity, one verifies using Lemma \ref{L:9} that there is no way to partition the terms of $\chi(x) - \delta_1\epsilon_1 - \delta_2\epsilon_2$ into minimal vanishing sums. Thus, this case is impossible. The cases where $\chi_H = a\rho_H \pm \lambda_1 \pm \lambda_2 \pm \lambda_3 \pm \lambda_4$ are impossible as well by an analogous argument. \\
		
		\paragraph{Case 2: $\chi_H = a\rho_H \pm \lambda_1 \pm \lambda_2 \mp \lambda_3$ or $\chi_H= a\rho_H \pm \lambda_1 \pm \lambda_2 \pm \lambda_3 \mp \lambda_4$.} In the former cases, as in the first paragraph of this proof, we obtain that $\chi(1)  \equiv \pm 1 \Mod{5}$ and $\Mod{q}$. That is, either $p,q \in \pi_1$ or $p,q \in \pi_{-1}$. Likewise, in the latter cases, we have $\chi(1) \equiv \pm 2 \Mod{5}$ and $\Mod{q}$, and so, either $p,q \in \pi_2$ or $p,q \in \pi_{-2}$. \\
	\end{proof}
	
	We now may obtain a result on the connectivity of the induced subgraph of $\Gamma(G)$ on the vertex set $\pi(G) - \{2\}$. First, we introduce some notation. Let $H$ be an abelian group, and let $\chi$ be a generalized character of $H$ taking the form
	
	$$
	\chi = a\rho_H + \sum_{i=1}^k \eta_i - \sum_{i=1}^\ell \theta_i
	$$
	for $a \in \Z$ and where all of the $\eta_i$'s and $\theta_i$'s are mutually distinct. We say that $\chi$ has \emph{type} $(k,\ell)$. Our main theorem implies that if $H$ is an abelian group having a generalized character $\chi$ whose values on $H^{\#}$ are sums of at most two roots of unity, then $\chi$ has one of the twenty following types:
	
	$$
	\begin{matrix}
	(0,0) & (1,1) & (1,0) & (0,1) & (2,0) \\
	(0,2) & (2,1) & (1,2) & (3,1) & (1,3) \\
	(3,0) & (0,3) & (4,0) & (0,4) & (5,0) \\
	(0,5) & (6,0) & (0,6) & (7,0) & (0,7)
	\end{matrix}
	$$
	
	We first have a lemma which will aid in our case analysis.
	
	\begin{lemma}\label{L:16}
		 Let $p$ be a prime, and let $C$ denote the cyclic group of order $3p$.
		\begin{itemize}
			\item[(i)] If $p \geq 7$, then $C$ has no characters of the types $(3,0)$, $(0,3)$, $(4,0)$, or $(0,4)$.
			\item[(ii)] If $p = 5$, then $C$ has no generalized characters of the types $( 5, 0)$, $(0, 5)$, $( 6, 0)$, or $(0, 6)$.
		\end{itemize}
	\end{lemma}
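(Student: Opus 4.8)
The plan is to reduce first to the ``all positive'' types: the types $(0,k)$ follow from the types $(k,0)$ by replacing $\chi$ with $-\chi$ (equivalently by complex conjugation), so it suffices to rule out $\chi = a\rho_C + \sum_{i=1}^k \lambda_i$ with distinct linear characters $\lambda_i$ of $C=C_{3p}$, for $k=3,4$ when $p\ge 7$ and for $k=5,6$ when $p=5$. Throughout I would write $C=A\times B$ with $A=\langle a\rangle\cong C_3$ and $B=\langle b\rangle\cong C_p$, and record each $\lambda_i$ by its component pair $(r_i,s_i)\in\Z/3\times\Z/p$. Since $\rho_C$ vanishes off the identity, $\chi(a^u b^v)=\sum_i \zeta_3^{r_i u}\zeta_p^{s_i v}$ for $(u,v)\neq(0,0)$, and this must be a sum of at most two roots of unity.

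For part (i) I would restrict to the Sylow $p$-subgroup $B$, where $\chi(b^v)=\sum_s m_s\zeta_p^{sv}$ with $m_s$ the number of $\lambda_i$ having $s_i=s$. Averaging over the nonidentity powers of $b$ gives $\sum_{v=1}^{p-1}|\chi(b^v)|^2 = p\sum_s m_s^2 - k^2$; as each value is a sum of at most two roots of unity the left side is at most $4(p-1)$, which for $k\in\{3,4\}$ and $p\ge 7$ forces $\sum_s m_s^2=k$, i.e.\ the $s_i$ are pairwise distinct. Then $\chi(b)=\zeta_p^{s_1}+\cdots+\zeta_p^{s_k}$ is a sum of $k$ distinct $p$-th roots of unity equal to $\delta_1\epsilon_1+\delta_2\epsilon_2$, so $\zeta_p^{s_1}+\cdots+\zeta_p^{s_k}-\delta_1\epsilon_1-\delta_2\epsilon_2=0$ is a vanishing sum of weight at most $k+2\le 6$. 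By Lemmas \ref{L:8}, \ref{L:9}, and \ref{L:11}, every minimal vanishing sum of weight at most $6$ has the property that the ratio of any two of its terms is a root of unity of order dividing $30$; since $p\ge 7$, no minimal subsum can contain two distinct $p$-th roots. Hence each of the $k$ distinct terms $\zeta_p^{s_i}$ lies in a separate minimal subsum, forcing at least $k$ subsums of weight $\ge 2$ and so total weight $\ge 2k$; but the total weight is at most $k+2$, giving $2k\le k+2$, which contradicts $k\ge 3$. This disposes of $(3,0)$ and $(4,0)$.

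For part (ii) the $p$-th roots are now fifth roots, so de Bruijn no longer separates the primes, and I would argue by counting. Taking $a=0$, one finds $\sum_{g\in C^{\#}}|\chi(g)|^2=k(15-k)$, which I would split over elements of order $3$, $5$, and $15$. Writing $n_r,m_s$ for the $A$- and $B$-component multiplicities, the order-$3$ and order-$5$ contributions are $3\sum_r n_r^2-k^2$ and $5\sum_s m_s^2-k^2$, and the bound $|\chi(g)|^2\le 4$ on the two order-$3$ and four order-$5$ elements gives $\sum_r n_r^2\le\lfloor(8+k^2)/3\rfloor$ and $\sum_s m_s^2\le\lfloor(16+k^2)/5\rfloor$. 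The remaining order-$15$ contribution, which must be at most $4\cdot 8=32$, then forces $3\sum_r n_r^2+5\sum_s m_s^2\ge k(15-k)+2k^2-32$. For $k=6$ this demands $\ge 94$, while the two bounds cap the left side at $3\cdot 14+5\cdot 10=92$, an immediate contradiction ruling out $(6,0)$.

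The case $k=5$ is where the real work lies, and I expect it to be the main obstacle, since the counting is merely tight rather than contradictory: it pins the marginals down to $(n_r)=(3,1,1)$ and $(m_s)=(2,1,1,1,0)$ and forces $|\chi(g)|=2$ on every element of order $3$ or $15$. Viewing $v\mapsto\chi(a^u b^v)$ as a function on $B$ of constant modulus $2$, its character coefficients are the column sums $\hat f_u(s)=\sum_{s_i=s}\zeta_3^{r_i u}$, each of modulus $1$ (the doubled column is forced to meet two distinct $A$-components, so $1+\zeta_3+\zeta_3^2=0$ again gives modulus $1$), and constant modulus is equivalent to the vanishing of all off-peak autocorrelations of this length-$5$ sequence, for both $u=1,2$. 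Up to the evident symmetries there are only three incidence patterns for how the doubled column meets the rows, and I would finish by checking that each violates $|\chi(a^u b^v)|=2$. In the cleanest pattern the two singleton rows share the doubled column; eliminating the three row-$r_0$ terms via $\sum_s\zeta_5^{s}=0$ collapses the order-$15$ value to $\pm\zeta_3^{c}(2\zeta_5^{m}+\zeta_5^{m'})$ with $m\ne m'$, whose modulus $|2+\zeta_5^{m'-m}|$ is never $2$ because $\cos(2\pi t/5)\ne-\tfrac14$. The two remaining patterns reduce, via the autocorrelation condition together with the classification of short vanishing sums (Lemmas \ref{L:8}, \ref{L:9}), to a short finite check in $\Z/5$ that no admissible sign-and-component assignment makes every shift vanish. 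Assembling these contradictions completes the lemma.
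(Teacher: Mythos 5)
Your route is genuinely different from the paper's, and most of it is sound. The paper disposes of part (i) in two lines: restrict $\chi$ to the subgroup $P$ of order $p$ and invoke the Main Theorem, which for a group of order $p \geq 7$ (coprime to $2$, $3$, $5$) permits only the form $a\rho_P + \delta_1\lambda_1 + \delta_2\lambda_2$, while three or four distinct constituents must survive restriction; and it settles part (ii) entirely by a GAP computation. Your proof of (i) is instead self-contained: the Parseval count $\sum_{v=1}^{p-1}|\chi(b^v)|^2 = p\sum_s m_s^2 - k^2 \leq 4(p-1)$ does force the $s_i$ to be pairwise distinct for $k \in \{3,4\}$ and $p \geq 7$ (the numerics check: $\sum_s m_s^2 \leq 4 + (k^2-4)/p$ rules out any repeated component), and the weight count $2k \leq k+2$ is a clean contradiction. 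Your (ii) is the more interesting divergence: the $k=6$ case is completely and correctly eliminated by hand ($94 > 92$), and the forced marginals $(3,1,1)$ and $(2,1,1,1,0)$ together with constant modulus $2$ on elements of order $3$ and $15$ for $k=5$ are exactly right --- this would replace a machine verification the paper does not attempt by hand.

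There are, however, two gaps. First, in (i) you attribute to Lemmas \ref{L:8}, \ref{L:9}, and \ref{L:11} the claim that every minimal vanishing sum of weight at most $6$ has term-ratios of order dividing $30$. Those lemmas do not yield this: weight-$5$ minimal sums are never classified in the paper, and $\zeta_{10} + \zeta_{10}^{-1} - \zeta_5 - \zeta_5^{-1} - 1 = 0$ is a minimal weight-$5$ relation that is not a rotation of $1 + \zeta_5 + \cdots + \zeta_5^4$. You need either the Conway--Jones-type classification from the literature (cf.\ \cite{PR}, \cite{LamLe}) or a short direct argument --- grouping the terms of a minimal relation by their $\zeta_p$-component and using that the component sums must coincide shows that a minimal relation containing two terms whose ratio has order divisible by a prime $p$ has weight at least $p$ --- which gives precisely what you use (no two distinct $p$-th roots, $p \geq 7$, in one minimal subsum of weight $\leq 6$). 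So this gap is one of justification and is easily repaired. Second, and more seriously, the $k=5$ endgame of (ii) is not actually carried out: you eliminate only the pattern in which the doubled column meets the two singleton rows, and the remaining pattern --- the doubled column meeting the triple row and one singleton row; up to row symmetry there are two incidence patterns, not three, though the second spawns several inequivalent placements of the empty column and the lone-row column in $\Z/5$ --- is deferred to a ``short finite check'' that is asserted rather than performed. Until that check is written out, your hand proof of (ii) is incomplete, though since the paper itself resorts to GAP \cite{Gap} at exactly this point, completing it would improve on the paper rather than merely match it.
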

	
	\begin{proof}
		For part (i), let $P$ be the cyclic subgroup of order $p$. In any of these four cases, one sees that our main theorem implies that when restricting $\chi$ to $P$, the $\lambda_i$ must remain distinct. In particular, since $p \geq 7$, none of 3 or four constituents can be absorbed into a regular character of $P$. But our main theorem also implies that this situation is only possible when $p \leq 5$.   
		
		The nonexistence of examples in (ii) was verified using GAP \cite{Gap}.
	\end{proof}
	
	We at last have our final result.
	
	\begin{theorem}\label{T:17}
		Let $G$ be a group, and assume there exists $\chi \in \Z[\Irr(G)]$ whose values on $G^{\#}$ are sums of at most two roots of unity. Let $\Gamma'(G)$ denote the induced subgraph of $\Gamma(G)$ obtained by discarding the vertex 2. Further, assume that the three sets $\pi_0$, $\pi_1 \cup \pi_{-1}$, and $\pi_2 \cup \pi_{-2}$ are each nonempty. Then, $\Gamma'(G)$ is disconnected. 
	\end{theorem}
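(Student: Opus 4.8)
The plan is to treat the vertex $3$ as the only possible bridge between the pieces that Theorem \ref{T:15} has already forced apart, and to show that it cannot bridge all of them. First I would dispose of the trivial case $3 \notin \pi(G)$: then $\Gamma'(G) = \Gamma''(G)$, and since at least two of the mutually non-adjacent (by Theorem \ref{T:15}) sets $\pi_i$ are nonempty, $\Gamma'(G)$ is already disconnected. So assume $3 \in \pi(G)$, fix $c \in \{-1,0,1\}$ with $c \equiv \chi(1) \Mod{3}$, and recall from Theorem \ref{T:15} that every nonempty $\pi_i$ is a union of connected components of $\Gamma''(G)$. As $3$ is the only vertex of $\Gamma'(G)$ lying outside $\pi(G) - \{2,3\}$, it suffices to exhibit one nonempty set of the form $\pi_i$ or $\pi_i \cup \pi_{-i}$ to which $3$ is non-adjacent: such a set is then a union of connected components of $\Gamma'(G)$ whose complement (containing $3$) is nonempty.

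The heart of the matter is to pin down, for each prime $q \ge 5$ adjacent to $3$ in $\Gamma(G)$, the admissible residues of $\chi(1)$. Such an adjacency means $G$ contains an element of order $3q$, hence a cyclic (abelian) subgroup $C = C_{3q}$, and since $C^{\#} \subseteq G^{\#}$ the restriction $\chi_C$ again takes values that are sums of at most two roots of unity. By Theorem \ref{T:B} the character $\chi_C$ has some type $(k,\ell)$, so that $\chi(1) \equiv k-\ell \Mod{3}$ and $\chi(1) \equiv k-\ell \Mod{q}$; in particular, writing $q \in \pi_i$, we have $i \equiv k-\ell \Mod{q}$. For $q \ge 11$ the order bounds of Theorem \ref{T:B}, Lemma \ref{L:16}(i), and Corollary \ref{C:13} force $|k-\ell| \le 2$, so $i = k-\ell$ and hence $i \equiv c \Mod{3}$. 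For $q = 7$ the same bounds leave only the types with $|k-\ell|\le 2$ together with $(5,0)$ and $(0,5)$, while for $q = 5$ Lemma \ref{L:16}(ii) excludes precisely $(5,0),(0,5),(6,0),(0,6)$, leaving $|k-\ell|\le 2$ together with $(3,0),(0,3),(4,0),(0,4),(7,0),(0,7)$. Running through the finitely many resulting pairs $(i\bmod q,\ k-\ell \bmod 3)$ in each case yields two clean conclusions: if $c = 0$ then no neighbour $q$ of $3$ lies in $\pi_1 \cup \pi_{-1}$, whereas if $c = \pm 1$ then no neighbour $q$ of $3$ lies in $\pi_0$.

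With this in hand the theorem follows immediately. If $c = 0$, the set $\pi_1 \cup \pi_{-1}$ is nonempty by hypothesis, has no edges to any other $\pi_j$ by Theorem \ref{T:15}, and no edge to $3$ by the preceding paragraph; it is therefore a union of connected components of $\Gamma'(G)$, and its complement (which contains $\pi_0 \ne \emptyset$ and the vertex $3$) is nonempty, so $\Gamma'(G)$ is disconnected. If instead $c = \pm 1$, the identical argument applied to the nonempty set $\pi_0$, now non-adjacent to $3$, disconnects $\Gamma'(G)$. I expect the genuine obstacle to be the $q \in \{5,7\}$ bookkeeping: unlike the case $q \ge 11$, here $|k-\ell|$ may exceed $2$, so the residue $i$ of $q$ wraps around modulo $q$ while the residue $c$ modulo $3$ does not, and it is exactly Lemma \ref{L:16} together with the order bounds of Corollary \ref{C:13} that annihilate the dangerous types $(5,0),(0,5),(6,0),(0,6)$ for $C_{15}$ and $(6,0),(0,6),(7,0),(0,7)$ for $C_{21}$, which would otherwise permit $3$ to reach a forbidden $\pi_i$.
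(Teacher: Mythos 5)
Your proposal is correct and follows essentially the same route as the paper's proof: reduce via Theorem \ref{T:15} to the vertex $3$ as the only possible bridge between the $\pi_i$'s, split on $\chi(1) \bmod 3$, use Lemma \ref{L:16} and Corollary \ref{C:13} to eliminate the exceptional types on cyclic subgroups of order $15$ and $21$, and conclude that $\pi_1 \cup \pi_{-1}$ (when $3 \mid \chi(1)$) or $\pi_0$ (when $\chi(1) \equiv \pm 1 \Mod{3}$) is a union of connected components of $\Gamma'(G)$ with nonempty complement. Your residue bookkeeping for $q \in \{5,7\}$ checks against the paper's case analysis (the paper handles $\chi(1) \equiv 2 \Mod{3}$ by replacing $\chi$ with $-\chi$, which your symmetric treatment of $c = \pm 1$ reproduces), and your explicit disposal of the case $3 \notin \pi(G)$ is a minor detail the paper leaves implicit.
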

	
	\begin{proof}
		Using the notation of the previous theorem, we take advantage of the fact that the induced subgraph of $\Gamma''(G)$ on the vertex set $\pi_i$ is either empty or a union of connected components of $\Gamma''(G)$. Thus, if any element of $\pi_i$ becomes connected by a path to an element of $\pi_j$ for $i \neq j$, this path must pass through the vertex 3. We proceed in cases depending on the congruence class of $\chi(1)$ modulo 3. In each case, we investigate the possible primes $p$ such that a cyclic subgroup of order $3p$ can exist within $G$.
		
		First, consider the case where $3 \mid \chi(1)$. If $H$ is an abelian subgroup of order $3p$ for $p \geq 5$, then $\chi_H$ can only have the following types:
		$$
			(0,0), (1,1), (3,0), (0,3), (6,0), (0,6).
		$$
		However, if $\chi_H$ has the latter two types, this implies $|H| \leq 16$ by Corollary \ref{C:13}, and so $p = 5$, but this is impossible by the above lemma. If $\chi_H$ has either of the first two types, this implies that 3 is linked to an element of $\pi_0$. If $G$ has another subgroup $K$ of order $3q$ for some prime $q$ such that $\chi_K$ has type $(3,0)$ or $(0,3)$, then $p = 5$ by our previous lemma. However, these two types cannot occur within $G$ simultaneously since this would imply that $\chi(1) \equiv -2 \Mod{5}$ and $\chi(1) \equiv 2 \Mod{5}$. In any case, 3 can at most link $\pi_0$ to either $\pi_{-2}$ or $\pi_{2}$, but not both simultaneously. More importantly, there is no path in $\Gamma'(G)$ linking an element of $\pi_1 \cup \pi_{-1}$ to an element of $\pi_0 \cup \pi_2 \cup \pi_{-2}$, and the result follows in this case.
		
		Now, assume that $\chi(1) \equiv 1 \Mod{3}$. We claim that the induced subgraph of $\Gamma(G)$ on $\pi_0$ is a union of connected components of $\Gamma'(G)$. By the previous theorem, it suffices to show that there exists no cyclic subgroup of order $3p$ for $p \in \pi_0$. If $H$ is an abelian subgroup of order $3p$, then $\chi_H$ can only have the following types:
		$$
		(1,0), (4,0), (7,0), (0,2), (1,3), (0,5), (2,1).
		$$
		If $\chi_H$ has type $(k, \ell)$ and $p \in \pi_0$, then $\chi(1) = a|H| + k - \ell$ is divisible by $p$, but since $p \mid |H|$, we have that $p \mid (k - \ell)$. Given the possible types $(k,\ell)$ above, the only prime divisors greater than 3 for any choice of $k - \ell$ are $5$ and $7$. If $p = 5$, then the only possible type for $\chi_H$ is $(0,5)$, but by Lemma \ref{L:16}, this is impossible since $|H| = 15$ in this case. On the other hand, if $p = 7$, then the only possible type for $\chi_H$ is $(7,0)$, but by Corollary \ref{C:13}, this type can only occur if $|H| \leq 15$ despite the fact that $|H| = 21$ in this case. Thus, $3$ is not not linked to any element of $\pi_0$ in $\Gamma'(G)$. However, by Theorem \ref{T:15}, there exist no edges between an element of $\pi_0$ and an element of $\pi_i$ for $i \neq 0$ in $\Gamma'(G)$.  Thus, the vertices of $\pi_0$ form a union of connected component of $\Gamma'(G)$ in this case.

		Lastly, if $\chi(1) \equiv 2 \Mod{3}$, then by replacing $\chi$ with $-\chi$, the generalized character now satisfies $\chi(1) \equiv 1 \Mod{3}$, so the previous case implies $\Gamma'(G)$ is disconnected in this case as well.
	\end{proof}
	
	We suspect that $\Gamma(G)$ itself is disconnected under the assumptions of this theorem. However, it appears that the techniques used here are not strong enough to prove such a result via a similar case analysis.

	
\end{document}